\newtheorem{proposition}{Proposition}
\newtheorem{definition}{Definition}
\newtheorem{theorem}{Theorem}
\newtheorem{remark}{Remark}
\newtheorem{lemma}{Lemma}
\newcommand{\N}{\mathbb{N}}
\newcommand{\cf}{\mathcal{F}}
\newcommand{\F}{\mathbb{F}}
\def\esssup{\text{ess sup}} 
\DeclareMathOperator*{\essinf}{ess\,inf}
\renewenvironment{proof}[1][\proofname]{%
  \par\pushQED{\qed}\normalfont%
  \topsep6\p@\@plus6\p@\relax
  \trivlist\item[\hskip\labelsep\bfseries#1\@addpunct{.}]%
  \ignorespaces
}{%
  \popQED\endtrivlist\@endpefalse
}
\author{ 
 Miryana Grigorova$^1$\footnote{Corresponding Author. Department of Statistics, University of Warwick, E-mail:miryana.grigorova@warwick.ac.uk} \\
 %LPMA,  Université Paris 7\\
 \and
 Marie-Claire Quenez$^2$ \footnote{LPSM, University Paris-Cit\'e} 
 \and
 Peng Yuan$^3$ \footnote{Department of Statistics, University of Warwick} 
}
\title{Non-linear non-zero-sum Dynkin games with Bermudan strategies}
\date{%
    $^{1,3}$University of Warwick\\%
    $^2$Universit\'e Paris Cit\'e\\[2ex]%
    \today
}
\begin{document}

\maketitle

\textit{Abstract:} In this paper, we study a non-zero-sum game with two players, where each of the players plays what we call Bermudan strategies and optimizes a general non-linear assessment functional of the pay-off. By using a recursive construction, we show that the game has a Nash equilibrium point.

\section{Introduction} Game problems with linear evaluations between a finite number of players are by now classical problems in stochastic control and optimal stopping (cf., e.g., \cite{Alario-Nazaret}, \cite{Bismut-1}, \cite{Cvitanic-Karatzas}, \cite{Dynkin}, \cite{Hamadene-1}, \cite{Hamadene-Zhang-1}, \cite{Hamadene-Hassani-1}, \cite{Hamadene-Hassani-2}, \cite{Hamadene-Hassani-3}, \cite{Hamadene-Lepeltier}, \cite{Kobylanski-Quenez-Rouy-Mironescu}, \cite{Kobylanski-Quenez-Roger} and \cite{Lepeltier-Maingueneau}) with various applications, in particular in economics and finance (cf., e.g., \cite{Hamadene-1}, \cite{Hamadene-Zhang-1}, \cite{Kifer-1} and \cite{Kobylanski-Quenez-Rouy-Mironescu}). In the recent years game problems with \emph{non-linear} evaluation functionals have attracted considerable interest: cf. \cite{Bayraktar-Yao-1} for the case of non-linear functionals of the form of worst case expectations over a set of possibly singular measures;  \cite{Dumitrescu-2}, \cite{Dumitrescu-3}, \cite{Dumitrescu-1} and \cite{Grigorova-Imkeller-Offen-Ouknine-Quenez} for the case of non-linear functionals induced by backward stochastic differential equations (BSDEs). Most of the works dealing with \emph{non-linear} games have focused on the \emph{zero-sum} case (cf., e.g., \cite{Bayraktar-Yao-1}, \cite{Dumitrescu-2}, \cite{Dumitrescu-3}, \cite{Dumitrescu-1} and \cite{Grigorova-Imkeller-Offen-Ouknine-Quenez}). \emph{Non-zero-sum} games are notoriously more intricate than their zero-sum counterparts even in the case of linear evaluations (cf., e.g., \cite{Hamadene-Zhang-1}, \cite{Hamadene-Hassani-2}, \cite{Hamadene-Hassani-3}, \cite{Laraki-Solan}, \cite{Morimoto-1} and \cite{Ohtsubo}). Non-zero-sum games with non-linear functionals have been considered in \cite{Grigorova-Quenez-1} in the discrete-time framework and with non-linear functionals induced by Backward SDEs with Lipschitz driver, in \cite{Kentia-Kuhn} in the continuous time framework and with non-linear functionals of the form of expected exponential utilities.\\
[0.2cm]
In the current paper, we address the question of existence of a Nash equilibrium point in a framework with general non-linear evaluations and with a set of stopping strategies which is in between the discrete time and the continuous time stopping strategies. The results of \cite{Grigorova-Quenez-1} can be seen as a particular case of the current paper.\\
[0.2cm]
The paper is organised as follows: In Section \ref{Section2}, we introduce the framework, including the set of optimal stopping strategies of the agents (namely the Bermudan strategies), the pay-off as well as the properties on the risk functionals $\rho^{1}$ and $\rho^{2}$ of agent 1 and agent 2. In Section \ref{Section3}, we present our main results and show that the non-linear non-zero-sum game with Bermudan strategies has a Nash equilibrium point.

\section{The framework} \label{Section2}
Let $T>0$ be a \textbf{fixed finite} terminal horizon.\\
[0.2cm]
Let $(\Omega,  \cf, P)$ be a (complete) probability space equipped with a right-continuous complete filtration $\F = \{\mathcal{F}_t \colon t\in[0,T]\}$. \\
[0.2cm]
In the sequel, equalities and inequalities between random variables are to be understood in the $P$-almost sure sense. Equalities between measurable sets are to be understood in the $P$-almost sure sense.\\
[0.2cm]
Let $\N$ be the set of natural numbers, including $0$.  Let $\N^*$ be the set of natural numbers, excluding $0$.\\
[0.2cm] 
We first define the so-called Bermudan stopping strategies (introduced in \cite{Grigorova-1}).\\
[0.2cm]
Let ($\theta_k)_{k\in\N}$ be a sequence of stopping times satisfying the following properties:
\begin{itemize}
\item[(a)] The sequence $(\theta_k)_{k\in\N}$  is non-decreasing, i.e. 
for all $k\in\N$, $\theta_k\leq \theta_{k+1}$, a.s. 
\item[(b)] $\lim_{k\to\infty}\uparrow \theta_k=T$ a.s. 
%\item[(c)] \textcolor{red}{For all $\omega$, there exits $k_0= k_0(\omega)\in\N_0$ such that $\theta_k(\omega)=T$, for all $k\geq k_0$. In other words, for all $\omega$, there exists at most a finite number of time points $\theta_k(\omega)$ such that $\theta_k(\omega)<T$.} 
\end{itemize}
Moreover, we set $\theta_0=0$. \\
[0.2cm]
 %Let $\theta_0=0 \leq \theta_1 \leq \theta_2 \leq ...\leq \theta_n=T$ be $n+1$ (different) stopping times. 
%We note that the second property ($\lim\uparrow \theta_k=T$) is a consequence of the third property. 
We note that the family of $\sigma$-algebras 
$({\cal F}_{\theta_k})_{k\in\N}$ is non- decreasing (as the sequence $(\theta_{k})$ is non-decreasing).
We denote by ${\Theta}$ the set of stopping times $\tau$ of the form 
\begin{equation}\label{form}
\tau= \sum_{k=0}^{+\infty} \theta_k {\bf 1}_{A_k}+T{\bf 1}_{\bar A},
\end{equation}
where $\{(A_{k})^{+\infty}_{k=0}, \bar{A}\}$ form a partition of $\Omega$ such that, for each $k\in\N$,  $A_{k} \in \mathcal{F}_{\theta_{k}}$, and $\bar{A} \in \mathcal{F}_{T}$.\\
[0.2cm]
The set ${\Theta}$ can also  be described as the set of stopping times $\tau$ such that for almost all $\omega \in \Omega$, either $\tau(\omega) = T$ or $\tau(\omega) = \theta_{k}(\omega)$, for some $k = k(\omega) \in \mathbbm{N}$.\\
[0.2cm]
Note that the set $\Theta$ is closed under concatenation, that is, for each $\tau$ $\in$ $\Theta$ and each 
$A \in {\cal F}_{\tau}$, the stopping time $\tau {\bf 1}_{A} + T {\bf 1}_{A^c}$ $\in$ $\Theta$. More generally, for each $\tau$ $\in$ $\Theta$, $\tau'$ $\in$ $\Theta$ and each 
$A \in {\cal F}_{\tau\wedge \tau'}$, the stopping time $\tau {\bf 1}_{A} + \tau' {\bf 1}_{A^c}$ is in  $\Theta$.  The set $\Theta$ is also closed under pairwise minimization (that is, for  each $\tau\in\Theta$ and $\tau'\in\Theta$, we have $\tau\wedge \tau'\in\Theta$) and under pairwise maximization (that is, for  each $\tau\in\Theta$ and $\tau'\in\Theta$, we have $\tau\vee \tau'\in\Theta$). Moreover, the set $\Theta$ is closed under monotone limit, that is, for each non-decreasing (resp. non-increasing) sequence of stopping times $(\tau_n)_{n \in {\mathbb N}} \in \Theta^{\mathbbm{N}}$, we have $\lim_{n \rightarrow + \infty} \tau_n$  $\in$ $\Theta$.\\
[0.2cm]
We note also that all stopping times in $\Theta$ are bounded from above by $T$. 
\begin{remark}\label{Rk_canonical} 
We have the following \emph{canonical} writing of the sets in \eqref{form}: 
\begin{align*}
A_0&=\{\tau=\theta_0\};\\
%A_1&=\{\tau=\theta_1,\theta_1<T\}\backslash A_0; \;\ldots%\;  \\
A_{n+1}&=\{\tau=\theta_{n+1},\theta_{n+1}<T\}\backslash (A_n\cup...\cup A_0); \text { for all } n\in\N^*\\
\bar A&=(\cup_{k=0}^{+\infty} A_k)^c  
\end{align*}  
From this writing, we have: if $\omega\in A_{k+1}\cap \{\theta_k<T\}$, then $\omega\notin \{\tau=\theta_k\}.$ 
\end{remark}
\noindent
For each $\tau \in \Theta$, we denote by $\Theta_{\tau}$ the set of stopping times $\nu \in 
\Theta$ such that $\nu \geq \tau$ a.s.\, The set $\Theta_{\tau}$ satisfies the same properties as 
the set $\Theta$.
We will refer to the set $\Theta$ as the set of \textbf{Bermudan stopping strategies},
and to the set $\Theta_\tau$ as the set of Bermudan stopping strategies, greater than or equal to $\tau$ (or the set of Bermudan stopping strategies from time $\tau$ perspective). For simplicity, the set $\Theta_{\theta_{k}}$ will be denoted by $\Theta_{k}$.

\begin{definition}\label{def.admi}
We say that a family 
$\phi=(\phi(\tau), \, \tau \in \Theta)$  is \emph{admissible} if it satisfies the following conditions 
\par
1. \quad for all
$\tau \in \Theta$, $\phi(\tau)$ is a real valued random variable, which is  $\mathcal{F}_\tau$-measurable. \par
 2. \quad  for all
$\tau,\tau'\in \Theta$, $\phi(\tau)=\phi(\tau')$ a.s.  on
$\{\tau=\tau'\}$.

Moreover, for $p \in [1, +\infty]$ fixed, we say that an admissible family $\phi$ is $p$-integrable, if for all $\tau \in \Theta$, $\phi(\tau)$ is in $L^{p}$.
\end{definition}
\noindent
Let $\phi=(\phi(\tau), \, \tau \in \Theta)$ be an admissible family. For a stopping time $\tau$  of the form \eqref{form}, we have 
\begin{equation}\label{formula}
\phi(\tau)= \sum_{k=0}^{+\infty} \phi(\theta_k ){\bf 1}_{A_k}+\phi(T){\bf 1}_{\bar A} \quad {\rm a.s.}
\end{equation}
Given two admissible families $\phi=(\phi(\tau), \, \tau \in \Theta)$ and $\phi'=(\phi'(\tau), \, \tau \in \Theta)$, we 
say that $\phi$ is {\em equal to} $\phi'$ and write $\phi = \phi'$ if, for all $\tau \in \Theta$, 
$\phi(\tau) =\phi'(\tau)$ a.s. We 
say that $\phi$ {\em dominates} $\phi'$ and write $\phi \geq \phi'$
if, for all $\tau \in \Theta$, 
$\phi(\tau) \geq \phi'(\tau)$ a.s.\\
[0.2cm] 
Let $p \in [1, +\infty]$. We introduce the following properties  on the non-linear operators $\rho_{S, \tau}[\cdot]$, which will appear in the sequel. \\
[0.2cm] 
For
$S\in\Theta$, $S'\in\Theta$, $\tau\in\Theta$, for $\eta$, $\eta_{1}$ and $\eta_2$ in $L^p(\cf_\tau)$, for $\xi=(\xi(\tau))$ an admissible p-integrable family:  
\begin{compactenum}[(i)]
\item[(i)] $\rho_{S,\tau}: L^p(\cf_\tau)\longrightarrow L^p(\cf_S)$ 
\item[(ii)] \emph{(admissibility)} $\rho_{S,\tau}[\eta]=\rho_{S',\tau}[\eta]$ a.s.  on $\{S=S'\}$. 
\item[(iii)] \emph{(knowledge preservation)}
$\rho_{\tau,S}[\eta]=\eta, $
for all $\eta\in L^p(\cf_S)$, all $\tau\in\Theta_S.$
\item[(iv)] \emph{(monotonicity)}  $\;\rho_{S,\tau}[\eta_1]\leq \rho_{S,\tau}[\eta_2]$ a.s., if  $\eta_1\leq \eta_2$ a.s. 
\item[(v)] \emph{(consistency)}  $\;\rho_{S,\theta}[\rho_{\theta,\tau}[\eta]]= \rho_{S,\tau}[\eta]$, for all $S, \theta, \tau$ in $\Theta$ such that $S\leq \theta\leq \tau$ a.s.  
\item [(vi)] \emph{("generalized zero-one law") }$\; I_A\rho_{S,\tau}[\xi(\tau)] =I_A\rho_{S,\tau'}[\xi(\tau')],$ for all $A\in\cf_S$, $\tau\in \Theta_{S}$, $\tau'\in\Theta_{S}$ such that $\tau=\tau'$ on $A$.  
\item[(vii)] \emph{(monotone Fatou property with respect to terminal condition)}\\
$\rho_{S, \tau}[\eta] \leq \liminf_{n \to +\infty} \rho_{S, \tau}[\eta_{n}]$, for $(\eta_{n}), \eta$ such that $(\eta_{n})$ is non-decreasing, $\eta_{n} \in L^{p}(\cf_{\tau})$, $\sup_{n}\eta_{n} \in L^{p}$, and $\lim_{n \to +\infty} \uparrow \eta_{n} = \eta$ a.s.
\item[(viii)] (left-upper-semicontinuity (LUSC) along Bermudan stopping times with respect to the terminal condition and the terminal time), that is,  
$$\limsup_{n \to +\infty}\rho_{S, \tau_{n}}[\phi(\tau_{n})] \leq \rho_{S, \nu}[\limsup_{n \to +\infty} \phi(\tau_{n})],$$
$\text{for each non-decreasing sequence }(\tau_{n}) \in \Theta_{S}^{\N} \text{ such that }\lim_{n \to +\infty} \uparrow \tau_{n} = \nu \text{ a.s.},$ and for each $p$-integrable admissible family $\phi$ such that $\sup_{n\in\N} |\phi(\tau_n)|\in L^p.$
\item[(ix)] $\limsup_{n \to +\infty}\rho_{\theta_{n}, T}[\eta] \leq \rho_{T, T}[\eta], \text { for all } \eta\in L^p(\cf_T).$\\
\end{compactenum}
These assumptions on $\rho$ ensure that the one-agent's non-linear optimal stopping problem admits a solution and that the first hitting time (when the value family ``hits'' the pay-off family) is optimal (cf. \cite{Grigorova-1} for more details).

\section{The game problem}\label{Section3}
We consider two agents, agent 1 and agent 2, whose pay-offs are defined via four admissible families $X^{1} = (X^{1}(\tau))_{\tau \in \Theta}$, $X^{2} = (X^{2}(\tau))_{\tau \in \Theta}$, $Y^{1} = (Y^{1}(\tau))_{\tau \in \Theta}$ and $Y^{2} = (Y^{2}(\tau))_{\tau \in \Theta}$. We assume that $X^{1}, X^{2}, Y^{1}$ and $Y^{2}$ are $p$-integrable families such that \\
\begin{compactenum}[(i)]
\item[(A1)] $X^{1} \leq Y^{1}, \; X^{2} \leq Y^{2}$ (that is, for each $\tau \in \Theta$, $X^{1}(\tau) \leq Y^{1}(\tau)$, and $X^{2}(\tau) \leq Y^{2}(\tau)$).\\
\item[(A2)] $X^{1}(T) = Y^{1}(T), \; X^{2}(T) = Y^{2}(T)$.\\
\item[(A3)] $\esssup_{\tau \in \Theta}X^{1}(\tau) \in L^{p}$, $\esssup_{\tau \in \Theta}X^{2}(\tau) \in L^{p}$,\\
[0.2cm]
$\esssup_{\tau \in \Theta}Y^{1}(\tau) \in L^{p}$ and $\esssup_{\tau \in \Theta}Y^{2}(\tau) \in L^{p}$.\\
\item[(A4)] $\limsup_{k \to +\infty}X^{1}(\theta_{k}) \leq X^{1}(T)$, $\;\;$   $\limsup_{k \to +\infty}X^{2}(\theta_{k}) \leq X^{2}(T)$.\\

\end{compactenum}
The set of stopping strategies of each agent at time $0$ is the set $\Theta$ of Bermudan stopping times. If the first agent plays $\tau_{1} \in \Theta$ and the second agent plays $\tau_{2} \in \Theta$, the pay-off of agent 1 (resp. agent 2) at time $\tau_{1} \wedge \tau_{2}$ is given by: 
$$I^{1}(\tau_{1}, \tau_{2}) \coloneqq X^{1}(\tau_{1})\mathbbm{1}_{\{\tau_{1} \leq \tau_{2}\}} + Y^{1}(\tau_{2})\mathbbm{1}_{\{\tau_{2} < \tau_{1}\}}$$ 
$$(\text{resp. } I^{2}(\tau_{1}, \tau_{2}) \coloneqq X^{2}(\tau_{2})\mathbbm{1}_{\{\tau_{2} < \tau_{1}\}} + Y^{2}(\tau_{1})\mathbbm{1}_{\{\tau_{1} \leq \tau_{2}\}}),$$
where we have adopted the convention: when $\tau_{1} = \tau_{2}$, it is the first agent who is responsible for stopping the game. The agents evaluate their respective pay-offs via possibly different evaluation functionals. Let $\rho^{1} = (\rho_{S, \tau}[\cdot])$ be the family of evaluation operators of agent 1, and let $\rho^{2} = (\rho_{S, \tau}[\cdot])$ be the family of evaluation operators of agents 2. If agent 1 plays $\tau_{1} \in \Theta$, and agent 2 plays $\tau_{2} \in \Theta$, then the assessment (or evaluation) of agent 1 (resp. agent 2) at time $0$ of his/her pay-off is given by:
$$J_{1}(\tau_{1}, \tau_{2}) \coloneqq \rho^{1}_{0, \tau_{1} \wedge \tau_{2}}[X^{1}(\tau_{1})\mathbbm{1}_{\{\tau_{1} \leq \tau_{2}\}} + Y^{1}(\tau_{2})\mathbbm{1}_{\{\tau_{2} < \tau_{1}\}}].$$
$$(\text{resp. } J_{2}(\tau_{1}, \tau_{2}) \coloneqq \rho^{2}_{0, \tau_{1} \wedge \tau_{2}}[X^{2}(\tau_{2})\mathbbm{1}_{\{\tau_{2} < \tau_{1}\}} + Y^{2}(\tau_{1})\mathbbm{1}_{\{\tau_{1} \leq \tau_{2}\}}]).$$
We assume that both $\rho^{1}$ and $\rho^{2}$ satisfy the properties (i) - (ix). We will investigate the problem of existence of a Nash equilibrium strategy $(\tau_{1}^{*}, \tau_{2}^{*})$.

\begin{definition}
A pair of Bermudan stopping times $(\tau_{1}^{*}, \tau_{2}^{*}) \in \Theta \times \Theta$ is called a Nash equilibrium strategy (or a Nash equilibrium point) for the above non-zero-sum non-linear Bermudan Dynkin game if: $J_{1}(\tau_{1}^{*}, \tau_{2}^{*}) \geq J_{1}(\tau_{1}, \tau_{2}^{*})$, for any $\tau_{1} \in \Theta$, and $J_{2}(\tau_{1}^{*}, \tau_{2}^{*}) \geq J_{2}(\tau_{1}^{*}, \tau_{2})$, for any $\tau_{2} \in \Theta$.

\end{definition}
\noindent
In other words, any unilateral deviation from the strategy $(\tau_{1}^{*}, \tau_{2}^{*})$ by one of the agent (the strategy of the other remaining fixed) does not render the deviating agent better off.

\begin{theorem}\label{TEXtheorem1}
Under assumptions (i) - (ix) on $\rho^{1}$ and $\rho^{2}$, there exists a Nash equilibrium point $(\tau_{1}^{*}, \tau_{2}^{*})$ for the game described above.

\end{theorem}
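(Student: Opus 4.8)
We construct the equilibrium by a backward recursion on the Bermudan grid $(\theta_k)$, combined with a finite-horizon truncation and a passage to the limit, using the one-agent non-linear optimal stopping theory of \cite{Grigorova-1} as the elementary building block. For each $n\in\N$ we first consider the truncated game $G_n$ in which both agents are restricted to the strategy set $\Theta^{n}:=\{\tau\in\Theta:\tau\in\{\theta_0,\ldots,\theta_n\}\cup\{T\}\}$, so that the only intermediate decision epochs are $\theta_0=0<\theta_1<\cdots<\theta_n$ and continuation from $\theta_n$ goes directly to the terminal time $T$. Since $\Theta^{n}$ is finite-valued and $\Theta$ is closed under concatenation, pairwise minimization and pairwise maximization, $G_n$ is a genuine finite-horizon non-zero-sum Bermudan game, for which we build a Nash equilibrium by backward induction. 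Any deviation $\tau_1\in\Theta$ in the full game can later be approximated within the truncations via $\tau_1^{(n)}:=\tau_1\mathbbm{1}_{\{\tau_1\le\theta_n\}}+T\mathbbm{1}_{\{\tau_1>\theta_n\}}\in\Theta^{n}$, which will be used to transfer the equilibrium property to the limit.

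The recursion proceeds as follows. At $T$ we set $V^{1,n}(T):=X^1(T)=Y^1(T)$ and $V^{2,n}(T):=X^2(T)=Y^2(T)$, using (A2). Assuming the equilibrium value families have been defined at $\theta_{k+1}$ (or at $T$ when $k=n$), we define at $\theta_k$ the continuation values $c^{i}_k:=\rho^{i}_{\theta_k,\theta_{k+1}}[V^{i,n}(\theta_{k+1})]$ and the stopping regions respecting the tie-break convention: agent $1$ stops on $D^{1}_k:=\{X^1(\theta_k)\ge c^{1}_k\}\in\cf_{\theta_k}$, and agent $2$ stops on $D^{2}_k:=(D^{1}_k)^c\cap\{X^2(\theta_k)\ge c^{2}_k\}$. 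Setting
$$V^{1,n}(\theta_k):=X^1(\theta_k)\mathbbm{1}_{D^1_k}+Y^1(\theta_k)\mathbbm{1}_{D^2_k}+c^1_k\mathbbm{1}_{(D^1_k\cup D^2_k)^c},$$
and symmetrically for $V^{2,n}$, and letting $\tau^{n}_1,\tau^{n}_2$ be the first grid times at which the respective regions are entered (and $T$ otherwise), we obtain a pair $(\tau^{n}_1,\tau^{n}_2)\in\Theta^{n}\times\Theta^{n}$. The crucial point is that, because (A1) gives $X^{i}\le Y^{i}$, each agent strictly prefers the opponent to stop, so the one-shot game at node $\theta_k$ has $(\text{Stop},\text{Continue})$ as an equilibrium precisely on $D^1_k$, $(\text{Continue},\text{Stop})$ on $D^2_k$, and $(\text{Continue},\text{Continue})$ otherwise; the tie-break convention selects the agent-$1$-priority equilibrium in the region of multiplicity. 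Using monotonicity (iv), consistency (v), knowledge preservation (iii) and the ``generalized zero-one law'' (vi), one checks that this is a subgame-perfect, hence Nash, equilibrium of $G_n$ with $J_i(\tau^n_1,\tau^n_2)=V^{i,n}(0)$; equivalently, against the fixed opponent strategy each agent faces a one-agent optimal stopping problem whose value is $V^{i,n}$ and whose optimal time is the first hitting time, by \cite{Grigorova-1}.

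We then let $n\to+\infty$. Since $\theta_n\uparrow T$ and $\Theta$ is closed under monotone limits, the strategies $\tau^{n}_i$ admit limits $\tau^{*}_i\in\Theta$ (after, if needed, passing to monotone subsequences of the hitting times). The value families are controlled uniformly in $n$ by the integrability assumption (A3), and their convergence is obtained from the monotone Fatou property (vii) and the left-upper-semicontinuity along Bermudan times (viii); the behaviour near the terminal horizon is handled by (A4), i.e. $\limsup_k X^{i}(\theta_k)\le X^{i}(T)$, together with assumption (ix), which guarantee that the truncated terminal evaluations $\rho^{i}_{\theta_n,T}[\cdot]$ do not overshoot $\rho^{i}_{T,T}[\cdot]$ in the limit. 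This produces a candidate pair $(\tau^{*}_1,\tau^{*}_2)\in\Theta\times\Theta$ together with limiting value families $V^1,V^2$.

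Finally we verify that $(\tau^{*}_1,\tau^{*}_2)$ is a Nash equilibrium. Fixing $\tau^{*}_2$, agent $1$'s problem $\sup_{\tau_1\in\Theta}J_1(\tau_1,\tau^{*}_2)$ is exactly a one-agent non-linear optimal stopping problem: stop the reward $X^1$ up to the random horizon $\tau^{*}_2$, with forced terminal pay-off $Y^1(\tau^{*}_2)$, assessed by $\rho^1$. By \cite{Grigorova-1} its value family and optimal (first hitting) time are well defined; we identify this value with $V^1(0)=J_1(\tau^{*}_1,\tau^{*}_2)$ and $\tau^{*}_1$ with the optimal time, yielding $J_1(\tau^{*}_1,\tau^{*}_2)\ge J_1(\tau_1,\tau^{*}_2)$ for all $\tau_1\in\Theta$; the inequality for agent $2$ is symmetric, using the tie-break convention. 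We expect the main obstacle to be precisely this limit passage and verification: one must interchange $\limsup_n$ with the non-linear operators $\rho^{i}$ — which is what (viii) and (ix) are designed to permit — and ensure the equilibrium property survives the limit, namely that an arbitrary deviation $\tau_1\in\Theta$ in the \emph{full} game, approximated by $\tau_1^{(n)}\in\Theta^{n}$ with $J_1(\tau_1^{(n)},\tau^{n}_2)\le V^{1,n}(0)$, cannot beat the limiting value. Controlling this approximation, and establishing the requisite convergence of the truncated value families and hitting times, is the technical heart of the argument.
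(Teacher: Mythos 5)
Your construction is genuinely different from the paper's: you truncate the grid at $\theta_n$ and build a subgame-perfect equilibrium of each truncated game $G_n$ by backward induction (in the spirit of the classical discrete-time non-zero-sum Dynkin game), whereas the paper runs a best-response iteration $\tau_{2n}\mapsto\tau_{2n+1}\mapsto\tau_{2n+2}$ starting from $(T,T)$, each step being a single one-agent stopping problem solved by \cite{Grigorova-1}. Your local one-shot analysis is sound: under (A1) and the tie-break convention, at each node one of the profiles $(\text{Stop},\text{Continue})$, $(\text{Continue},\text{Stop})$, $(\text{Continue},\text{Continue})$ is always an equilibrium, and the regions $D^1_k$, $D^2_k$ select one measurably; the truncated games plausibly admit the equilibria you describe.

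The genuine gap is the limit passage, which you yourself flag as ``the technical heart'' but do not carry out --- and it is exactly where your route fails as stated. The times $\tau^n_i$ are first hitting times of regions $D^{i,n}_k$ that depend on $n$ through the backward induction, and nothing forces them to be monotone in $n$; your parenthetical ``after, if needed, passing to monotone subsequences of the hitting times'' is not an available operation, since a sequence of Bermudan stopping times need not admit a monotone subsequence and $\Theta$ is only closed under \emph{monotone} limits. The paper's iteration is engineered precisely to manufacture this missing monotonicity: Proposition \ref{TEXProposition2} and Lemma \ref{TEXlemma1111} give $\tau_{n+1}\leq\tau_{n-1}$, so $(\tau_{2n+1})$ and $(\tau_{2n})$ are non-increasing, their limits lie in $\Theta$, and --- crucially for Bermudan times --- they are eventually stationary a.s., which is what lets the evaluations pass to the limit. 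Moreover, even granting convergence, properties (vii)--(ix) alone do not yield $\lim_{n}J_i(\cdot,\tau_{2n})=J_i(\cdot,\tau_2^{*})$; the paper must add the continuity assumption \eqref{TEXEquatiON1616} along non-increasing sequences in Proposition \ref{TEXproposition333} for exactly this step, an assumption your argument would also need but does not invoke. Finally, identifying $\tau_1^{*}$ as a best response to $\tau_2^{*}$ ``by \cite{Grigorova-1}'' is an assertion rather than a proof: a limit of optimal first hitting times for the truncated problems is not automatically the optimal first hitting time of the limiting problem. To repair the argument you would need either a stabilization property of the $\tau^n_i$ or a deviation estimate $J_1(\tau_1^{(n)},\tau^n_2)\to J_1(\tau_1,\tau_2^{*})$ uniform over the approximation; neither is supplied.
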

\noindent
We will construct a sequence $(\tau_{2n+1}, \tau_{2n})_{n \in \mathbbm{N}}$ (by induction), for which we will show that it converges to a Nash equilibrium point.\\
[0.2cm]
We set $\tau_{1} \coloneqq T$ and $\tau_{2} \coloneqq T$. We suppose that $\tau_{2n-1} \in \Theta$ and $\tau_{2n} \in \Theta$ have been defined. We set, for each $k \in \mathbbm{N}$, 
\begin{equation} \label{TEXeQuationN2}
\xi^{2n+1}(\theta_{k}) \coloneqq X^{1}(\theta_{k})\mathbbm{1}_{\{\theta_{k} < \tau_{2n}\}} + Y^{1}(\tau_{2n})\mathbbm{1}_{\{\tau_{2n} \leq \theta_{k}\}}.
\end{equation}
Moreover, $\xi^{2n+1}(T) \coloneqq Y^{1}(\tau_{2n})$. This definition is ``consistent'' with the above, as by \eqref{TEXeQuationN2}, $\mathbbm{1}_{\{\theta_{k}=T\}}\xi^{2n+1}(\theta_{k}) = \mathbbm{1}_{\{\theta_{k}=T\}}Y^{1}(\tau_{2n})$.\\
[0.3cm]
For $\tau \in \Theta$ of the form $\tau = \sum_{k\in\mathbbm{N}}\theta_{k}\mathbbm{1}_{A_{k}} + T\mathbbm{1}_{\bar{A}}$, where $((A_{k}), \bar{A})$ is a partition, $A_{k}$ is $\mathcal{F}_{\theta_{k}}$-measurable for each $k \in \mathbbm{N}$, and $\bar{A}$ is $\mathcal{F}_{T}$-measurable,
\begin{equation}\label{TEXeQuationN22}
\xi^{2n+1}(\tau) \coloneqq \sum_{k\in\mathbbm{N}}\xi^{2n+1}(\theta_{k})\mathbbm{1}_{A_{k}} + \xi^{2n+1}(T)\mathbbm{1}_{\bar{A}}.
\end{equation}
We note that $\xi^{2n+1}(\theta_{k})$ is the pay-off at $\theta_{k} \wedge \tau_{2n}$ of agent 1 (up to the equality $\{\theta_{k} = \tau_{2n}\}$) if agent 1 plays $\theta_{k}$ and agent 2 plays $\tau_{2n}$.\\
[0.2cm]
We also note that: 
$$\xi^{2n+1}(\tau) = X^{1}(\tau)\mathbbm{1}_{\{\tau<\tau_{2n}\}} + Y^{1}(\tau_{2n})\mathbbm{1}_{\{\tau_{2n}\leq\tau\}}.$$
Thus, $\xi^{2n+1}(\tau)$ is the pay-off at $\tau \wedge \tau_{2n}$ of agent 1 (up to the equality $\{\tau = \tau_{2n}\}$) if agent 1 plays $\tau$ and agent 2 plays $\tau_{2n}$.\\
[0.2cm]
For each $S \in \Theta$, we define
\begin{equation}\label{TEXeQuationN55}
\begin{aligned}
&V^{2n+1}(S) \coloneqq \esssup_{\tau \in \Theta_{S}} \rho^{1}_{S, \tau \wedge \tau_{2n}}[\xi^{2n+1}(\tau)]\\
&\tilde{\tau}_{2n+1} \coloneqq \essinf \tilde{\mathcal{A}}^{1}, \text{ where } \tilde{\mathcal{A}}^{1} \coloneqq \{\tau \in \Theta: V^{2n+1}(\tau) = \xi^{2n+1}(\tau)\}\\
&\tau_{2n+1} \coloneqq (\tilde{\tau}_{2n+1} \wedge \tau_{2n-1})\mathbbm{1}_{\{\tilde{\tau}_{2n+1} \wedge \tau_{2n-1} < \tau_{2n}\}} + \tau_{2n-1}\mathbbm{1}_{\{\tilde{\tau}_{2n+1} \wedge \tau_{2n-1} \geq \tau_{2n}\}}.
\end{aligned}
\end{equation}
Assuming that $\limsup_{k \to +\infty}X^{1}(\theta_{k}) \leq X^{1}(T)$ (from (A4)) ensures that\\ 
$\limsup_{k \to +\infty}\xi^{2n+1}(\theta_{k}) \leq \xi^{2n+1}(T)$. This is a technical condition on the pay-off which we use to apply Theorem 2.3 in \cite{Grigorova-1}.\\
[0.3cm]
We recall that under the assumptions of Theorem 2.3 in \cite{Grigorova-1}, the Bermudan stopping time $\tilde{\tau}_{2n+1}$ is optimal for the optimal stopping problem with value $V^{2n+1}(0)$, that is 
\begin{equation}\label{TEXeQuationN6666}
V^{2n+1}(0) = \rho^{1}_{0, \tilde{\tau}_{2n+1} \wedge \tau_{2n}}[\xi(\tilde{\tau}_{2n+1})] = \sup_{\tau \in \Theta}\rho^{1}_{0, \tau \wedge \tau_{2n}}[\xi^{2n+1}(\tau)].
\end{equation}
We also recall that $V^{2n+1}(T) = \xi^{2n+1}(T)$, under the assumption of knowledge preservation on $\rho$.

\begin{remark}\label{TEXremark2}
i) It is not difficult to show, by induction, that for each $n \in \mathbbm{N}$, $(\xi^{2n+1}(\tau))_{\tau\in\Theta}$ is an admissible $L^{p}$-integrable family, and $\tau_{2n+1}$ is a Bermudan stopping time (for the latter property, we use that $\Theta$ has the property of stability by concatenation of two Bermudan stopping times).\\
[0.2cm]
ii) For each $n \in \mathbbm{N}$, for each $\tau \in \Theta$, $\xi^{2n+1}(\tau) = \xi^{2n+1}(\tau \wedge \tau_{2n})$.\\
[0.2cm]
Indeed, we have: 
\begin{equation}\label{TEXeQuationN4}
\begin{aligned} 
\xi^{2n+1}(\theta_{k}) &= X^{1}(\theta_{k})\mathbbm{1}_{\{\theta_{k}<\tau_{2n}\}} + Y^{1}(\tau_{2n})\mathbbm{1}_{\{\tau_{2n} \leq \theta_{k}\}}\\
&=  X^{1}(\theta_{k}\wedge\tau_{2n})\mathbbm{1}_{\{\theta_{k}\wedge\tau_{2n}<\tau_{2n}\}} + Y^{1}(\tau_{2n})\mathbbm{1}_{\{\tau_{2n} \leq \theta_{k}\wedge\tau_{2n}\}} = \xi^{2n+1}(\theta_{k}\wedge\tau_{2n}).
\end{aligned}
\end{equation}
Now, let $\tau \in \Theta$ be of the form $\tau = \sum_{k\in\mathbbm{N}}\theta_{k}\mathbbm{1}_{A_{k}} + T\mathbbm{1}_{\bar{A}}$. By definition of $\xi^{2n+1}(\tau)$, of $\xi^{2n+1}(T)$ and by Eq. \eqref{TEXeQuationN4}, we have:
\begin{align*}
\xi^{2n+1}(\tau) &= \sum_{k\in\mathbbm{N}}\xi^{2n+1}(\theta_{k})\mathbbm{1}_{A_{k}} + \xi^{2n+1}(T)\mathbbm{1}_{\bar{A}}\\ 
&= \sum_{k\in\mathbbm{N}}\xi^{2n+1}(\theta_{k}\wedge\tau_{2n})\mathbbm{1}_{A_{k}} + Y^{1}(\tau_{2n})\mathbbm{1}_{\bar{A}} = \xi^{2n+1}(\tau\wedge\tau_{2n}).
\end{align*}

\end{remark}

\begin{proposition}\label{TEXProposition1}
i) $\xi^{2n+1}(\tau)\mathbbm{1}_{\{\tau_{2n}  \leq \theta_{k}\}} = Y^{1}(\tau_{2n})\mathbbm{1}_{\{\tau_{2n} \leq \theta_{k}\}}$.\\
[0.2cm] 
ii) $V^{2n+1}(\theta_{k})\mathbbm{1}_{\{\tau_{2n} \leq \theta_{k}\}} = Y^{1}(\tau_{2n})\mathbbm{1}_{\{\tau_{2n} \leq \theta_{k}\}}$.\\
[0.2cm]
iii) $V^{2n+1}(\tau)\mathbbm{1}_{\{\tau_{2n} \leq \tau\}} = Y^{1}(\tau_{2n})\mathbbm{1}_{\{\tau_{2n} \leq \tau\}}$.\\
[0.2cm]
iv) For each $n \in \mathbbm{N}$, $\tilde{\tau}_{2n+1} = \essinf\{\tau \in \Theta: V^{2n+1}(\tau) = X^{1}(\tau)\} \wedge \tau_{2n}$. In particular, $\tilde{\tau}_{2n+1} \leq \tau_{2n}$.

\end{proposition}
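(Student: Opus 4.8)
The plan is to establish i)--iv) in turn, with iv) as the substantive step; parts i)--iii) are the ``boundary'' identities expressing that once $\tau_{2n}\le\tau$, agent~1's problem has effectively terminated with value $Y^1(\tau_{2n})$. For i), since $\{\theta_k<\tau_{2n}\}$ and $\{\tau_{2n}\le\theta_k\}$ are disjoint, multiplying the defining formula \eqref{TEXeQuationN2} (equivalently the identity $\xi^{2n+1}(\tau)=X^1(\tau)\mathbbm{1}_{\{\tau<\tau_{2n}\}}+Y^1(\tau_{2n})\mathbbm{1}_{\{\tau_{2n}\le\tau\}}$) by $\mathbbm{1}_{\{\tau_{2n}\le\theta_k\}}$ annihilates the $X^1$-term and leaves $Y^1(\tau_{2n})\mathbbm{1}_{\{\tau_{2n}\le\theta_k\}}$; this is a one-line computation.

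For ii), fix $\tau\in\Theta_{\theta_k}$, put $\beta:=\tau\wedge\tau_{2n}$ and $A:=\{\tau_{2n}\le\theta_k\}\in\mathcal{F}_{\theta_k}$. By Remark~\ref{TEXremark2}(ii) the terminal condition satisfies $\xi^{2n+1}(\tau)=\xi^{2n+1}(\beta)\in L^p(\mathcal{F}_\beta)$, and on $A$ we have $\beta=\tau_{2n}$, so $\xi^{2n+1}(\beta)=Y^1(\tau_{2n})$ there and moreover $\theta_k\ge\beta$. The point is that on $A$ the evaluation time $\theta_k$ dominates the terminal time $\beta$, which is exactly the knowledge-preservation regime. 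Since (iii) is a global statement I pass to $\theta_k\vee\beta\in\Theta_\beta$ (equal to $\theta_k$ on $A$): knowledge preservation (iii) gives $\rho^1_{\theta_k\vee\beta,\beta}[\xi^{2n+1}(\beta)]=\xi^{2n+1}(\beta)$, and admissibility (ii), read on $A=\{\theta_k\vee\beta=\theta_k\}$, transfers this to $\mathbbm{1}_A\,\rho^1_{\theta_k,\beta}[\xi^{2n+1}(\beta)]=\mathbbm{1}_A\,\xi^{2n+1}(\beta)=\mathbbm{1}_A Y^1(\tau_{2n})$. Because $\rho^1_{\theta_k,\beta}[\xi^{2n+1}(\beta)]=\rho^1_{\theta_k,\tau\wedge\tau_{2n}}[\xi^{2n+1}(\tau)]$ and $\tau\in\Theta_{\theta_k}$, $A\in\mathcal{F}_{\theta_k}$ are arbitrary, multiplying \eqref{TEXeQuationN55} by $\mathbbm{1}_A$ and interchanging the indicator with the essential supremum yields ii). Assertion iii) then follows either by the verbatim argument with a general $S=\tau$ in place of $\theta_k$ (note $\{\tau_{2n}\le\tau\}\in\mathcal{F}_\tau$), or from ii) by writing $V^{2n+1}$ as an admissible family and decomposing along $\tau=\sum_k\theta_k\mathbbm{1}_{A_k}+T\mathbbm{1}_{\bar A}$ as in \eqref{formula}, using $V^{2n+1}(T)=\xi^{2n+1}(T)=Y^1(\tau_{2n})$ on $\bar A$. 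I expect the localization in ii) --- giving meaning to $\rho^1_{\theta_k,\beta}$ when $\beta\le\theta_k$ and routing it through (iii) and (ii) --- to be the main subtlety of these first three parts.

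For iv), the structural consequence of iii) is that $V^{2n+1}(\tau)=\xi^{2n+1}(\tau)$ holds automatically on $\{\tau_{2n}\le\tau\}$ (both sides equal $Y^1(\tau_{2n})$ there), so that $\tau\in\tilde{\mathcal{A}}^{1}$ if and only if $V^{2n+1}(\tau)=X^1(\tau)$ on $\{\tau<\tau_{2n}\}$. Taking $\tau=\tau_{2n}$ (for which $\{\tau<\tau_{2n}\}=\varnothing$) shows $\tau_{2n}\in\tilde{\mathcal{A}}^{1}$, hence $\tilde\tau_{2n+1}\le\tau_{2n}$; and writing $\mathcal{B}:=\{\tau\in\Theta:V^{2n+1}(\tau)=X^1(\tau)\}$, $\sigma:=\essinf\mathcal{B}$, the same equivalence gives $\mathcal{B}\subseteq\tilde{\mathcal{A}}^{1}$, so $\sigma\ge\tilde\tau_{2n+1}$ and therefore $\sigma\wedge\tau_{2n}\ge\tilde\tau_{2n+1}$. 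For the reverse inequality I would use the attainment $\tilde\tau_{2n+1}\in\tilde{\mathcal{A}}^{1}$ from Theorem~2.3 of \cite{Grigorova-1}: on $\{\tilde\tau_{2n+1}<\tau_{2n}\}$ this gives $V^{2n+1}(\tilde\tau_{2n+1})=X^1(\tilde\tau_{2n+1})$. Choosing any $\rho'\in\mathcal{B}$ (necessarily $\rho'\ge\tilde\tau_{2n+1}$, since $\mathcal{B}\subseteq\tilde{\mathcal{A}}^{1}$) and forming the concatenation $\rho:=\tilde\tau_{2n+1}\mathbbm{1}_{\{\tilde\tau_{2n+1}<\tau_{2n}\}}+\rho'\mathbbm{1}_{\{\tilde\tau_{2n+1}\ge\tau_{2n}\}}$, which lies in $\Theta$ because $\{\tilde\tau_{2n+1}<\tau_{2n}\}\in\mathcal{F}_{\tilde\tau_{2n+1}}=\mathcal{F}_{\tilde\tau_{2n+1}\wedge\rho'}$, the admissibility of $V^{2n+1}$ and $X^1$ shows $\rho\in\mathcal{B}$; hence $\sigma\le\rho=\tilde\tau_{2n+1}$ on $\{\tilde\tau_{2n+1}<\tau_{2n}\}$. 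Combined with $\tilde\tau_{2n+1}=\tau_{2n}$ on $\{\tilde\tau_{2n+1}\ge\tau_{2n}\}$, this yields $\sigma\wedge\tau_{2n}\le\tilde\tau_{2n+1}$ and thus the claimed equality. I expect iv) to be the main obstacle, and within it the essential-infimum/concatenation bookkeeping: the measurability of the splitting set (handled by $\rho'\ge\tilde\tau_{2n+1}$) and the degenerate case $\mathcal{B}=\varnothing$, where the convention $\essinf\varnothing=T$ together with $\tilde\tau_{2n+1}\le\tau_{2n}$ must be checked to give the identity directly.
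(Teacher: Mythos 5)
Your proposal is correct and follows the same overall architecture as the paper: parts i)--iii) are the boundary identities obtained from the definition of $\xi^{2n+1}$, knowledge preservation, and a localisation on $\{\tau_{2n}\le\theta_k\}$, with iii) deduced from ii) by decomposing $\tau$ along the $\theta_k$'s; part iv) rests on the observation that, by iii), membership in $\tilde{\mathcal{A}}^{1}$ only constrains $V^{2n+1}$ on $\{\tau<\tau_{2n}\}$, where $\xi^{2n+1}=X^{1}$. Two execution differences are worth recording. In ii), the paper localises the \emph{terminal} time: it uses the generalized zero-one law (vi) to replace $\tau\wedge\tau_{2n}$ by $\tau\wedge\tau_{2n}\wedge\theta_k$ on $\{\tau_{2n}\le\theta_k\}$, after which knowledge preservation applies globally; you instead localise the \emph{evaluation} time, lifting $\theta_k$ to $\theta_k\vee\beta$ and invoking admissibility (ii) of $\rho$ in its first argument. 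Both routes are legitimate under axioms (i)--(ix) and cost the same. In iv), the paper dispatches the essential-infimum identity in one line after the equivalence, whereas you supply the actual bookkeeping: $\tau_{2n}\in\tilde{\mathcal{A}}^{1}$ and $\mathcal{B}\subseteq\tilde{\mathcal{A}}^{1}$ for one inequality, and the concatenation $\tilde\tau_{2n+1}\mathbbm{1}_{\{\tilde\tau_{2n+1}<\tau_{2n}\}}+\rho'\mathbbm{1}_{\{\tilde\tau_{2n+1}\ge\tau_{2n}\}}$ together with the attainment $V^{2n+1}(\tilde\tau_{2n+1})=\xi^{2n+1}(\tilde\tau_{2n+1})$ for the other; this is a genuine (and welcome) addition, since the paper leaves it implicit. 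The one loose end you flag --- the case $\mathcal{B}=\emptyset$, where your construction has no $\rho'$ to concatenate with and the stated identity needs a separate check --- is real, but it is equally unaddressed by the paper's own proof, so it is a gap in the statement's bookkeeping rather than in your argument. (Minor point: statement i) as printed mixes $\tau$ with the indicator $\{\tau_{2n}\le\theta_k\}$; like the paper's proof, your one-line computation is really the version with matching arguments, i.e.\ $\tau=\theta_k$ or the indicator $\{\tau_{2n}\le\tau\}$, which is what is actually used downstream.)
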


\begin{proof}
i) On the set $\{\tau = T\}$, we have $\xi^{2n+1}(\tau) = \xi^{2n+1}(T) = Y^{1}(\tau_{2n})$. On the set $\{\tau = \theta_{k} < T\}$, by the second statement in Remark \ref{TEXremark2}, we have $\xi^{2n+1}(\tau) = \xi^{2n+1}(\theta_{k}) = \xi^{2n+1}(\theta_{k} \wedge \tau_{2n})$. Hence, on the set $\{\tau = \theta_{k} < T\} \cap \{\tau_{2n} \leq \theta_{k}\}$, we have $\xi^{2n+1}(\tau) = \xi^{2n+1}(\tau_{2n}) = Y^{1}(\tau_{2n})$, which proves the desired property.\\
[0.3cm]
ii) We have:
\begin{align*}
\mathbbm{1}_{\{\tau_{2n} \leq \theta_{k}\}}V^{2n+1}(\theta_{k}) &= \mathbbm{1}_{\{\tau_{2n} \leq \theta_{k}\}} \esssup_{\tau \in \Theta_{k}}\rho_{\theta_{k}, \tau\wedge\tau_{2n}}[\xi^{2n+1}(\tau\wedge\tau_{2n})]\\
&=\esssup_{\tau \in \Theta_{k}}\mathbbm{1}_{\{\tau_{2n} \leq \theta_{k}\}}\rho_{\theta_{k}, \tau\wedge\tau_{2n}}[\xi^{2n+1}(\tau\wedge\tau_{2n})]\\
&=\esssup_{\tau \in \Theta_{k}}\mathbbm{1}_{\{\tau_{2n} \leq \theta_{k}\}}\rho_{\theta_{k}, \tau\wedge\tau_{2n}\wedge\theta_{k}}[\xi^{2n+1}(\tau\wedge\tau_{2n}\wedge\theta_{k})],
\end{align*}
where we have used the ``genrealized zero-one law'' to obtain the last equality.\\
[0.2cm]
For any $\tau \in \Theta_{k}$, $\tau\wedge\tau_{2n}\wedge\theta_{k} = \tau_{2n}\wedge\theta_{k} \leq \theta_{k}$. Hence, 
\begin{align*}
\mathbbm{1}_{\{\tau_{2n} \leq \theta_{k}\}}\rho_{\theta_{k}, \tau\wedge\tau_{2n}\wedge\theta_{k}}[\xi^{2n+1}(\tau\wedge\tau_{2n}\wedge\theta_{k})] &= \mathbbm{1}_{\{\tau_{2n} \leq \theta_{k}\}}\rho_{\theta_{k}, \tau_{2n}\wedge\theta_{k}}[\xi^{2n+1}(\tau_{2n}\wedge\theta_{k})]\\
&= \mathbbm{1}_{\{\tau_{2n} \leq \theta_{k}\}}\xi^{2n+1}(\tau_{2n}\wedge\theta_{k}),
\end{align*}
where we have used the knowledge-preserving property of $\rho$ to obtain the last equality.\\
[0.2cm]
Finally, we get
$$\mathbbm{1}_{\{\tau_{2n} \leq \theta_{k}\}}V^{2n+1}(\theta_{k}) = \mathbbm{1}_{\{\tau_{2n} \leq \theta_{k}\}}\xi^{2n+1}(\tau_{2n}) = \mathbbm{1}_{\{\tau_{2n} \leq \theta_{k}\}}Y^{1}(\tau_{2n}).$$
iii) Let $\tau \in \Theta$ be the form $\tau = \sum_{k\in\mathbbm{N}}\theta_{k}\mathbbm{1}_{A_{k}} + T\mathbbm{1}_{\bar{A}}$. Then, by admissibility, we have 
$$V^{2n+1}(\tau) = \sum_{k\in\mathbbm{N}}V^{2n+1}(\theta_{k})\mathbbm{1}_{A_{k}} + V^{2n+1}(T)\mathbbm{1}_{\bar{A}}.$$
Hence, 
\begin{align*}
V^{2n+1}(\tau)\mathbbm{1}_{\{\tau_{2n} \leq \tau\}} &= \sum_{k\in\mathbbm{N}}V^{2n+1}(\theta_{k})\mathbbm{1}_{A_{k} \cap \{\tau_{2n} \leq \tau\}} + V^{2n+1}(T)\mathbbm{1}_{\bar{A} \cap \{\tau_{2n} \leq \tau\}}\\
&= \sum_{k\in\mathbbm{N}}V^{2n+1}(\theta_{k})\mathbbm{1}_{A_{k} \cap \{\tau_{2n} \leq \theta_{k}\}} + V^{2n+1}(T)\mathbbm{1}_{\bar{A} \cap \{\tau_{2n} \leq T\}}\\
&= \sum_{k\in\mathbbm{N}}Y^{1}(\tau_{2n})\mathbbm{1}_{A_{k} \cap \{\tau_{2n} \leq \theta_{k}\}} + \xi^{2n+1}(T)\mathbbm{1}_{\bar{A} \cap \{\tau_{2n} \leq T\}},
\end{align*}
where we have used the previous property (ii) to obtain the last equality. Hence, we get 
\begin{align*}
V^{2n+1}(\tau)\mathbbm{1}_{\{\tau_{2n} \leq \tau\}} &= \sum_{k\in\mathbbm{N}}Y^{1}(\tau_{2n})\mathbbm{1}_{A_{k} \cap \{\tau_{2n} \leq \theta_{k}\}} + \xi^{2n+1}(T)\mathbbm{1}_{\bar{A} \cap \{\tau_{2n} \leq T\}}\\ 
&= \sum_{k\in\mathbbm{N}}Y^{1}(\tau_{2n})\mathbbm{1}_{A_{k} \cap \{\tau_{2n} \leq \theta_{k}\}} + Y^{1}(\tau_{2n})\mathbbm{1}_{\bar{A} \cap \{\tau_{2n} \leq T\}} = Y^{1}(\tau_{2n})\mathbbm{1}_{\{\tau_{2n} \leq \tau\}}.
\end{align*}
iv) By the previous property (iii), we have, $V^{2n+1}(\tau) = \xi^{2n+1}(\tau)$ if and only if $V^{2n+1}(\tau)\mathbbm{1}_{\{\tau < \tau_{2n}\}} = \xi^{2n+1}(\tau)\mathbbm{1}_{\{\tau < \tau_{2n}\}}$. Hence, 
$$\tilde{\tau}_{2n+1} = \essinf\{\tau \in \Theta: V^{2n+1}(\tau) = X^{1}(\tau)\} \wedge \tau_{2n}.$$

\end{proof}
\noindent
Similarly to \eqref{TEXeQuationN2}, \eqref{TEXeQuationN22} and \eqref{TEXeQuationN55}, we define:
$$\xi^{2n+2}(\theta_{k}) \coloneqq X^{2}(\theta_{k})\mathbbm{1}_{\{\theta_{k}<\tau_{2n+1}\}} + Y^{2}(\tau_{2n+1})\mathbbm{1}_{\{\tau_{2n+1}\leq \theta_{k}\}}, \text{ and }$$
$$\xi^{2n+2}(T) \coloneqq Y^{2}(\tau_{2n+1}).$$
For $\tau \in \Theta$ of the form $\tau = \sum_{k\in\mathbbm{N}}\theta_{k}\mathbbm{1}_{A_{k}} + T\mathbbm{1}_{\bar{A}}$, we define
\begin{equation}\label{TEXeQuatioN8888}
\begin{aligned}
&\xi^{2n+2}(\tau) \coloneqq \sum_{k\in\mathbbm{N}}\xi^{2n+2}(\theta_{k})\mathbbm{1}_{A_{k}} + \xi^{2n+2}(T)\mathbbm{1}_{\bar{A}}\\
&V^{2n+2}(S) \coloneqq \esssup_{\tau \in \Theta_{S}} \rho^{2}_{S, \tau \wedge \tau_{2n+1}}[\xi^{2n+2}(\tau)]\\
&\tilde{\tau}_{2n+2} \coloneqq \essinf \tilde{\mathcal{A}}^{2}, \text{ where } \tilde{\mathcal{A}}^{2} \coloneqq \{\tau \in \Theta: V^{2n+2}(\tau) = \xi^{2n+2}(\tau)\}\\
&\tau_{2n+2} \coloneqq (\tilde{\tau}_{2n+2} \wedge \tau_{2n})\mathbbm{1}_{\{\tilde{\tau}_{2n+2} \wedge \tau_{2n} < \tau_{2n+1}\}} + \tau_{2n}\mathbbm{1}_{\{\tilde{\tau}_{2n+2} \wedge \tau_{2n} \geq \tau_{2n+1}\}}.
\end{aligned}
\end{equation}
\noindent
The random variable $\xi^{2n+2}(\tau)$ is exactly the pay-off at $\tau \wedge \tau_{2n+1}$ of agent 2, if agent 1 plays $\tau_{2n+1}$ and agent 2 plays $\tau$. Hence, $V^{2n+2}(S)$ is the optimal value at time $S$ for agent 2, when agent 1's strategy is fixed to $\tau_{2n+1}$. Assuming that $\limsup_{k \to +\infty}X^{2}(\theta_{k}) \leq X^{2}(T)$ leads to $\limsup_{k \to +\infty}\xi^{2n+2}(\theta_{k}) \leq \xi^{2n+2}(T)$, which we use in applying Theorem 2.3 in \cite{Grigorova-1}. By Theorem 2.3 in \cite{Grigorova-1}, the Bermudan stopping time $\tilde{\tau}_{2n+2}$ is optimal for the problem with value $V^{2n+2}(0)$, that is, 
$$V^{2n+2}(0) = \rho^{2}_{0, \tilde{\tau}_{2n+2} \wedge \tau_{2n+1}}[\xi^{2n+2}(\tilde{\tau}_{2n+2})] = \sup_{\tau \in \Theta}\rho^{2}_{0, \tau \wedge \tau_{2n+1}}[\xi^{2n+2}(\tau)].$$

\begin{remark}\label{TEXRemark33}
Let us recall that (cf. \cite{Grigorova-1}) $\tilde{\tau}_{n} = \essinf \{\tau \in \Theta: V^{n}(\tau) = \xi^{n}(\tau)\}$ satisfies the property: $V^{n}(\tilde{\tau}_{n}) = \xi^{n}(\tilde{\tau}_{n})$. (This is due to the property of stability of $\Theta$ by monotone limit and to the right-continuity-along Bermudan stopping strategies of the families $(V^{n}(\tau))$ and $(\xi^{n}(\tau))$).

\end{remark}

\begin{remark}\label{TEXRemark44}
By analogy with Remark \ref{TEXremark2}, we have:\\
[0.2cm] 
i) $(\xi^{2n+2}(\tau))$ is a admissible $L^{p}$-integrable family;\\
[0.2cm]
ii) for each $n \in \mathbbm{N}$, for each $\tau \in \Theta$, $\xi^{2n+2}(\tau) = \xi^{2n+2}(\tau \wedge \tau_{2n+1})$.

\end{remark}

\begin{proposition} \label{TEXProposition2}
We assume that $\rho$ satisfies the usual ``zero-one law''. Then, for all $m \geq 1$, $\tilde{\tau}_{m+2} \leq \tau_{m}$.

\end{proposition}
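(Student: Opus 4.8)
The plan is to prove the statement $P(m):\tilde\tau_{m+2}\leq\tau_m$ by induction on $m$ in steps of two, deducing $P(m)$ from $P(m-2)$. Throughout I write $X,Y,\rho,V,\xi$ for the objects attached to the agent whose response produces $\tilde\tau_{m+2}$ (agent $1$ when $m$ is odd, agent $2$ when $m$ is even); recall from Proposition \ref{TEXProposition1}(iv) and its even-index analogue that $\tilde\tau_{j}=\essinf\{\tau\in\Theta:V^{j}(\tau)=X(\tau)\}\wedge\tau_{j-1}$, so in particular $\tilde\tau_{j}\leq\tau_{j-1}$ for every $j$. The base cases $P(1),P(2)$ are immediate since $\tau_1=\tau_2=T$ dominates every Bermudan stopping time. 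For the inductive step I will first record two consequences of $P(m-2)$, namely $\tilde\tau_m\le\tau_m$ and the identity $V^{m}(\tau_m)=X(\tau_m)$ on $\{\tau_{m-1}>\tau_m\}$.

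Both consequences follow from the concatenation formula for $\tau_m$. Since $P(m-2)$ gives $\tilde\tau_m\leq\tau_{m-2}$, the minimum $\tilde\tau_m\wedge\tau_{m-2}$ equals $\tilde\tau_m$, so $\tau_m=\tilde\tau_m\mathbbm{1}_{E}+\tau_{m-2}\mathbbm{1}_{E^{c}}$ with $E:=\{\tilde\tau_m<\tau_{m-1}\}$; on $E$ one has $\tau_m=\tilde\tau_m$ and on $E^{c}$ one has $\tau_m=\tau_{m-2}\geq\tilde\tau_m$, whence $\tilde\tau_m\leq\tau_m$. A short check gives $\{\tau_{m-1}>\tau_m\}\subseteq E$ (on $E^{c}$, $\tau_{m-1}\le\tilde\tau_m\le\tau_{m-2}=\tau_m$), and on $E$ Remark \ref{TEXRemark33} yields $V^{m}(\tilde\tau_m)=\xi^{m}(\tilde\tau_m)=X(\tilde\tau_m)$ because $\tilde\tau_m<\tau_{m-1}$ there. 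This produces the announced identity $V^{m}(\tau_m)=X(\tau_m)$ on $\{\tau_{m-1}>\tau_m\}$.

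Next I split according to the concatenation set of $\tau_{m+1}$, namely $G:=\{\tilde\tau_{m+1}\wedge\tau_{m-1}<\tau_m\}\in\mathcal{F}_{\tau_m}$, on which $\tau_{m+1}=\tilde\tau_{m+1}\wedge\tau_{m-1}$ and off which $\tau_{m+1}=\tau_{m-1}$. On $G$ I get $\tilde\tau_{m+2}\leq\tau_{m+1}=\tilde\tau_{m+1}\wedge\tau_{m-1}<\tau_m$ directly. On $G^{c}$, $\tilde\tau_{m+1}\le\tau_m$ forces $\tilde\tau_{m+1}=\tau_m$ and $\tau_{m+1}=\tau_{m-1}\geq\tau_m$, so that $\mathbbm{1}_{G^{c}}\xi^{m+2}(\tau)=\mathbbm{1}_{G^{c}}\xi^{m}(\tau)$ for all $\tau\in\Theta$. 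I then intend to use the (usual) zero-one law to pull $\mathbbm{1}_{G^{c}}$ inside $\rho$ and inside the essential supremum, together with the generalized zero-one law (vi) to replace the terminal time $\tau\wedge\tau_{m+1}$ by $\tau\wedge\tau_{m-1}$ on $G^{c}$, thereby obtaining the value-agreement identity $\mathbbm{1}_{G^{c}}V^{m+2}(\tau_m)=\mathbbm{1}_{G^{c}}V^{m}(\tau_m)$. On $G^{c}\cap\{\tau_{m-1}=\tau_m\}$ one has $\tilde\tau_{m+2}\leq\tau_{m+1}=\tau_m$; on $G^{c}\cap\{\tau_{m-1}>\tau_m\}$ the identity of the previous paragraph gives $V^{m+2}(\tau_m)=V^{m}(\tau_m)=X(\tau_m)$, so $\tau_m$ lies in the hitting set $\{V^{m+2}=X\}$ and hence $\tilde\tau_{m+2}=\essinf\{V^{m+2}=X\}\wedge\tau_{m+1}\leq\tau_m$. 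Assembling the pieces gives $P(m)$, and the even and odd cases are identical after exchanging the roles of the two agents.

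I expect the main obstacle to be the value-agreement identity $\mathbbm{1}_{G^{c}}V^{m+2}(\tau_m)=\mathbbm{1}_{G^{c}}V^{m}(\tau_m)$. The difficulty is that $V^{m+2}$ and $V^{m}$ are essential suprema of $\rho$-evaluations that integrate over the whole future, where the two payoff families $\xi^{m+2}$ and $\xi^{m}$ disagree (they coincide only on $G^{c}$); it is precisely the locality furnished by the zero-one law that shows each $\rho$-evaluation, once multiplied by $\mathbbm{1}_{G^{c}}$, depends only on the payoff restricted to $G^{c}$. Making this rigorous requires the measurability fact $G^{c}\in\mathcal{F}_{\tau_m}$ (so that $\mathbbm{1}_{G^{c}}$ commutes with both $\rho_{\tau_m,\cdot}$ and the essential supremum over $\Theta_{\tau_m}$, exactly as in the proof of Proposition \ref{TEXProposition1}(ii)) and a careful matching of terminal times via (vi); this is the one point at which the hypothesis of the usual zero-one law, not needed in Proposition \ref{TEXProposition1}, is genuinely used.
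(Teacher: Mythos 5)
Your proposal is correct in substance, but it reorganizes the argument rather differently from the paper, so a comparison is worth recording. The paper argues by contradiction with a minimal counterexample index $n$ and works on the hypothetical bad set $\Gamma=\{\tau_{n}<\tilde{\tau}_{n+2}\}$; there it deduces $\tau_{n+1}=\tau_{n-1}$, hence $\xi^{n+2}=\xi^{n}$, hence (via the localisation Proposition \ref{TEXproposition-localisation}, i.e.\ the usual zero-one law) $V^{n+2}(\tilde{\tau}_{n})=V^{n}(\tilde{\tau}_{n})=\xi^{n+2}(\tilde{\tau}_{n})$, and concludes $\tilde{\tau}_{n+2}\leq\tilde{\tau}_{n}=\tau_{n}$ on $\Gamma$, a contradiction. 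You instead run a direct induction in steps of two and split exhaustively on the concatenation set $G=\{\tilde{\tau}_{m+1}\wedge\tau_{m-1}<\tau_{m}\}$; your set $G^{c}$ contains the paper's $\Gamma$, and on it you prove the same key identity (payoff agreement $\xi^{m+2}=\xi^{m}$ on $G^{c}$ implies value agreement via the zero-one law plus property (vi)). So the central mechanism is identical; what your version buys is a cleaner logical skeleton (no minimal counterexample, and the odd and even chains are genuinely decoupled, since you never invoke $\tilde{\tau}_{m+1}\leq\tau_{m-1}$), at the price of the extra sub-case $G^{c}\cap\{\tau_{m-1}=\tau_{m}\}$ and of having to identify $V^{m}(\tau_{m})=X(\tau_{m})$ on $\{\tau_{m-1}>\tau_{m}\}$, which the paper gets for free from $\tau_{n}=\tilde{\tau}_{n}$ on $\Gamma$.

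One step deserves to be written out more carefully: from $V^{m+2}(\tau_{m})=X(\tau_{m})=\xi^{m+2}(\tau_{m})$ holding only on the set $B:=G^{c}\cap\{\tau_{m-1}>\tau_{m}\}$, you cannot literally say that ``$\tau_{m}$ lies in the hitting set'', since membership in $\tilde{\mathcal{A}}$ requires the equality $V^{m+2}(\tau)=\xi^{m+2}(\tau)$ almost surely. The standard repair is to use closedness of $\Theta$ under concatenation: set $\sigma:=\tau_{m}\mathbbm{1}_{B}+T\mathbbm{1}_{B^{c}}$ (legitimate because $B\in\mathcal{F}_{\tau_{m}}$), note that $V^{m+2}(\sigma)=\xi^{m+2}(\sigma)$ on $B$ by your identity and on $B^{c}$ because $V^{m+2}(T)=\xi^{m+2}(T)$, so $\sigma\in\tilde{\mathcal{A}}$ and $\tilde{\tau}_{m+2}\leq\sigma=\tau_{m}$ on $B$. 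The paper's own conclusion on $\Gamma$ relies on the same implicit localisation, so this is a matter of spelling out a detail rather than a gap in the approach; the value-agreement identity you flag as the main obstacle does indeed go through exactly as you sketch, since $G^{c}\in\mathcal{F}_{\tau_{m}}$ and $\tau\wedge\tau_{m+1}=\tau\wedge\tau_{m-1}$ on $G^{c}$.
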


\begin{proof}
We suppose, by way of contradiction, that there exists $m \geq 1$ such that $P(\tilde{\tau}_{m+2} > \tau_{m}) > 0$, and we set $n \coloneqq \min\{m \geq 1: P(\tilde{\tau}_{m+2} > \tau_{m}) > 0\}$. We have $\tilde{\tau}_{n+1} \leq \tau_{n-1}$, by definition of $n$. This observation, together with the definition of $\tau_{n+1}$ and with the inequality of part (iv) of Proposition \ref{TEXProposition1} gives:
\begin{equation}\label{TEXeQuationN6}
\begin{aligned}
\tau_{n+1} &= (\tilde{\tau}_{n+1} \wedge \tau_{n-1})\mathbbm{1}_{\{\tilde{\tau}_{n+1}\wedge\tau_{n-1} < \tau_{n}\}} + \tau_{n-1}\mathbbm{1}_{\{\tilde{\tau}_{n+1}\wedge\tau_{n-1} \geq \tau_{n}\}}\\
&= \tilde{\tau}_{n+1}\mathbbm{1}_{\{\tilde{\tau}_{n+1} < \tau_{n}\}} + \tau_{n-1}\mathbbm{1}_{\{\tilde{\tau}_{n+1} \geq \tau_{n}\}}\\
&= \tilde{\tau}_{n+1}\mathbbm{1}_{\{\tilde{\tau}_{n+1} < \tau_{n}\}} + \tau_{n-1}\mathbbm{1}_{\{\tilde{\tau}_{n+1} = \tau_{n}\}}
\end{aligned}
\end{equation}
For similar reasons, we have 
\begin{equation}\label{TEXeQuationN7}
\tau_{n} = \tilde{\tau}_{n}\mathbbm{1}_{\{\tilde{\tau}_{n} < \tau_{n-1}\}} + \tau_{n-2}\mathbbm{1}_{\{\tilde{\tau}_{n}=\tau_{n-1}\}}.
\end{equation}
For the easing of the presentation, we set $\Gamma \coloneqq \{\tau_{n}<\tilde{\tau}_{n+2}\}$.\\
[0.2cm]
On the set $\Gamma$, we have:\\
[0.2cm]
1) $\tau_{n} < \tilde{\tau}_{n+2} \leq \tau_{n+1}$, the last inequality being due to property (iv) of Proposition \ref{TEXProposition1}.\\
[0.2cm]
2) $\tau_{n+1} = \tau_{n-1}$. This is due to (1), together with Eq. \eqref{TEXeQuationN6}.\\
[0.2cm]
3) $\xi^{n+2} = \xi^{n}$. This is a consequence of (2) and the definitions of $\xi^{n+2}$ and $\xi^{n}$.\\
[0.2cm]
4) $\tau_{n} = \tilde{\tau}_{n}$.\\
[0.2cm] 
We prove that $\{\tilde{\tau}_{n} = \tau_{n-1}\}\cap\Gamma=\emptyset$, which together with Eq. \eqref{TEXeQuationN7}, gives the desired statement.\\
[0.2cm]
Due to Eq. \eqref{TEXeQuationN7}, we have $\{\tilde{\tau}_{n} = \tau_{n-1}\} = \{\tilde{\tau}_{n} = \tau_{n-1}\}\cap\{\tau_{n} = \tau_{n-2}\}$. Thus, we have 
$$\{\tilde{\tau}_{n} = \tau_{n-1}\} \cap \Gamma = \{\tilde{\tau}_{n} = \tau_{n-1}, \tau_{n} = \tau_{n-2}<\tilde{\tau}_{n+2}\}.$$
Now, we have $\tilde{\tau}_{n} \leq \tau_{n-2}$ (due to the definition of $n$). Hence, 
$$\{\tilde{\tau}_{n} = \tau_{n-1}\}\cap\Gamma = \{\tilde{\tau}_{n} = \tau_{n-1} \leq \tau_{n-2} = \tau_{n} < \tilde{\tau}_{n+2}\} = \emptyset,$$
where the equality with $\emptyset$ is due to $\tilde{\tau}_{n+2} \leq \tau_{n-1}$.\\
[0.2cm]
We note that combining properties (1) and (4) gives $\tilde{\tau}_{n} < \tilde{\tau}_{n+2}$ on $\Gamma$. We will obtain a contradiction with this property. To this end, we will show that:
\begin{equation}\label{TEXeQuationN8}
\mathbbm{1}_{\Gamma}V^{n+2}(\tilde{\tau}_{n}) = \mathbbm{1}_{\Gamma}\xi^{n+2}(\tilde{\tau}_{n}).
\end{equation}
By definition of $\tilde{\tau}_{n}$ and by Remark \ref{TEXRemark33}, we have:
$$V^{n}(\tilde{\tau}_{n}) = \xi^{n}(\tilde{\tau}_{n}).$$
This property, together with property (3) on $\Gamma$, gives $V^{n}(\tilde{\tau}_{n}) = \xi^{n}(\tilde{\tau}_{n}) = \xi^{n+2}(\tilde{\tau}_{n})$ on $\Gamma$. In order to show Eq. \eqref{TEXeQuationN8}, it suffices to show 
$$\mathbbm{1}_{\Gamma}V^{n+2}(\tilde{\tau}_{n}) = \mathbbm{1}_{\Gamma}V^{n}(\tilde{\tau}_{n}).$$
By property (4) on $\Gamma$ and Proposition \ref{TEXproposition-localisation} (applied with $A=\Gamma \in \mathcal{F}_{\tau_{n}}$ and $\tau = \tau_{n}$), we have
$$\mathbbm{1}_{\Gamma}V^{n+2}(\tilde{\tau}_{n}) = \mathbbm{1}_{\Gamma}V^{n+2}(\tau_{n}) = \mathbbm{1}_{\Gamma}V_{\Gamma}^{n+2}(\tau_{n}).$$
Due to property (3) on $\Gamma$, $V^{n+2}_{\Gamma}$ and $V^{n}_{\Gamma}$ have the same pay-off, and by applying again Proposition \ref{TEXproposition-localisation} and property (4) on $\Gamma$, we have
$$\mathbbm{1}_{\Gamma}V_{\Gamma}^{n+2}(\tau_{n}) = \mathbbm{1}_{\Gamma}V_{\Gamma}^{n}(\tau_{n}) = \mathbbm{1}_{\Gamma}V^{n}(\tau_{n}) = \mathbbm{1}_{\Gamma}V^{n}(\tilde{\tau}_{n}).$$
We have shown that $\mathbbm{1}_{\Gamma}V^{n+2}(\tilde{\tau}_{n}) = \mathbbm{1}_{\Gamma}V^{n}(\tilde{\tau}_{n})$, which is the desired equality. Hence, we get $\tilde{\tau}_{n+2} \leq \tilde{\tau}_{n}$ on $\Gamma$ (as by definition $\tilde{\tau}_{n+2} = \essinf\{\tau \in \Theta: V^{n+2}(\tau) = \xi^{n+2}(\tau)\}$). However, this is in contradiction with the property $\tilde{\tau}_{n+2} > \tilde{\tau}_{n}$ on $\Gamma$. The proof is complete.

\end{proof}

\begin{lemma}\label{TEXlemma1111}
i) For all $n \geq 2$, $\tau_{n+1} = \tilde{\tau}_{n+1}\mathbbm{1}_{\{\tilde{\tau}_{n+1} < \tau_{n}\}} + \tau_{n-1}\mathbbm{1}_{\{\tilde{\tau}_{n+1} = \tau_{n}\}}$.\\
[0.2cm]
ii) For all $n \geq 2$, $\tilde{\tau}_{n+1} = \tau_{n+1} \wedge \tau_{n}$.\\
[0.2cm]
iii) On $\{\tau_{n} = \tau_{n-1}\}$, $\tau_{m} = T$, for all $m \in \{1, ..., n\}$.

\end{lemma}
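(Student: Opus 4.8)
For (i), the plan is to start from the defining formula for $\tau_{n+1}$ in \eqref{TEXeQuationN55} (and its even analogue in \eqref{TEXeQuatioN8888}) and simplify both the minimum and the indicator events by invoking the two inequalities already in hand. Since Proposition \ref{TEXProposition2} gives $\tilde{\tau}_{n+1} \leq \tau_{n-1}$ (apply it with $m = n-1 \geq 1$), we have $\tilde{\tau}_{n+1} \wedge \tau_{n-1} = \tilde{\tau}_{n+1}$, which removes the minimum from both indicator conditions. Since part (iv) of Proposition \ref{TEXProposition1} (together with its even-index counterpart, valid by the symmetry of the two constructions) gives $\tilde{\tau}_{n+1} \leq \tau_{n}$, the event $\{\tilde{\tau}_{n+1} \geq \tau_{n}\}$ coincides with $\{\tilde{\tau}_{n+1} = \tau_{n}\}$. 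Substituting these two simplifications into the definition yields the claimed expression. This is exactly the computation already carried out in \eqref{TEXeQuationN6} within the proof of Proposition \ref{TEXProposition2}, now performed for an arbitrary $n \geq 2$.

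For (ii), I would split $\Omega$ according to the two indicator events appearing in part (i). On $\{\tilde{\tau}_{n+1} < \tau_{n}\}$, part (i) gives $\tau_{n+1} = \tilde{\tau}_{n+1} < \tau_{n}$, so $\tau_{n+1} \wedge \tau_{n} = \tilde{\tau}_{n+1}$. On $\{\tilde{\tau}_{n+1} = \tau_{n}\}$, part (i) gives $\tau_{n+1} = \tau_{n-1}$; moreover on this event $\tau_{n} = \tilde{\tau}_{n+1} \leq \tau_{n-1}$ by Proposition \ref{TEXProposition2}, hence $\tau_{n+1} \wedge \tau_{n} = \tau_{n-1} \wedge \tau_{n} = \tau_{n} = \tilde{\tau}_{n+1}$. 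Combining the two cases proves $\tilde{\tau}_{n+1} = \tau_{n+1} \wedge \tau_{n}$ everywhere.

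The heart of (iii), and the step I expect to require the most care, is a telescoping set inclusion derived from part (i). Applying part (i) at index $k-1$ (which needs $k-1 \geq 2$, i.e. $k \geq 3$), one sees that on $\{\tilde{\tau}_{k} < \tau_{k-1}\}$ one has $\tau_{k} = \tilde{\tau}_{k} < \tau_{k-1}$, so $\{\tau_{k} = \tau_{k-1}\} \subseteq \{\tilde{\tau}_{k} = \tau_{k-1}\}$; and on $\{\tilde{\tau}_{k} = \tau_{k-1}\}$ part (i) gives $\tau_{k} = \tau_{k-2}$, whence $\{\tau_{k} = \tau_{k-1}\} \subseteq \{\tau_{k-1} = \tau_{k-2}\}$. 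Iterating this inclusion downward from $k = n$ to $k = 3$, and appending the trivial identity $\{\tau_{2} = \tau_{1}\} = \Omega$, shows that $\{\tau_{n} = \tau_{n-1}\}$ is contained in every $\{\tau_{k} = \tau_{k-1}\}$ for $k \in \{2, \dots, n\}$; hence on $\{\tau_{n} = \tau_{n-1}\}$ we have $\tau_{1} = \tau_{2} = \cdots = \tau_{n}$. Since the construction initialises $\tau_{1} = \tau_{2} = T$, all these values equal $T$, which is the assertion.

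The two delicate points to watch are the index bookkeeping (ensuring $k-2 \geq 1$ so that part (i) is legitimately applicable at each step of the telescoping) and the uniform treatment of the even and odd recursions in part (i), which rests on having the even-index analogue of Proposition \ref{TEXProposition1}(iv); both are routine given the symmetry of the construction and the results already established.
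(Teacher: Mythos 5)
Your proposal is correct and follows essentially the same route as the paper: part (i) is the simplification of the defining recursion via Proposition \ref{TEXProposition2} and Proposition \ref{TEXProposition1}(iv) (the computation of \eqref{TEXeQuationN6} for general $n$), part (ii) is the same two-case evaluation of $\tau_{n+1}\wedge\tau_n$, and your telescoping chain of inclusions $\{\tau_k=\tau_{k-1}\}\subseteq\{\tau_{k-1}=\tau_{k-2}\}$ for (iii) is just the paper's induction unrolled. The index bookkeeping and the appeal to the even-index analogues are handled correctly.
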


\begin{proof}
i) This property follows from the definition of $\tau_{n+1}$, together with Proposition \ref{TEXProposition2}, and with property (iv) of Proposition \ref{TEXProposition1}.\\
[0.3cm]
ii) By using (i), we get 
\begin{align*}
\tau_{n+1}\wedge\tau_{n} &= (\tilde{\tau}_{n+1}\wedge\tau_{n})\mathbbm{1}_{\{\tilde{\tau}_{n+1}<\tau_{n}\}} + (\tau_{n-1}\wedge\tau_{n})\mathbbm{1}_{\{\tilde{\tau}_{n+1}=\tau_{n}\}}\\
&= \tilde{\tau}_{n+1}\mathbbm{1}_{\{\tilde{\tau}_{n+1}<\tau_{n}\}} + (\tau_{n-1}\wedge\tilde{\tau}_{n+1})\mathbbm{1}_{\{\tilde{\tau}_{n+1}=\tau_{n}\}}\\
&= \tilde{\tau}_{n+1}\mathbbm{1}_{\{\tilde{\tau}_{n+1}<\tau_{n}\}} + \tilde{\tau}_{n+1}\mathbbm{1}_{\{\tilde{\tau}_{n+1}=\tau_{n}\}},
\end{align*}
where we have used Proposition \ref{TEXProposition2} for the last equality.\\
[0.2cm]
Finally, by using property (iv) of Proposition \ref{TEXProposition1}, we get
$\tau_{n+1} \wedge \tau_{n} = \tilde{\tau}_{n+1}.$\\
[0.3cm]
iii) To prove this property, we proceed by induction. The property is true for $n=2$. We suppose that the property is true at rank $n-1$ (where $n \geq 3$), that is on $\{\tau_{n-1} = \tau_{n-2}\}$, $\tau_{m}=T$, for all $m \in \{1, ..., n-1\}$.\\
[0.2cm]
From the expression for $\tau_{n}$ from statement (i), we get 
$$\tau_{n} = \tilde{\tau}_{n}\mathbbm{1}_{\{\tilde{\tau}_{n}<\tau_{n-1}\}} + \tau_{n-2}\mathbbm{1}_{\{\tilde{\tau}_{n}=\tau_{n-1}\}}.$$
Hence, $\tau_{n} = \tau_{n-2}$ on the set $\{\tau_{n} = \tau_{n-1}\}$. We conclude by the induction hypothesis.
\end{proof}

\begin{lemma}\label{TEXlemma2222}
The following inequalities hold true:\\
[0.2cm]
i) $J_{1}(\tau, \tau_{2n}) \leq J_{1}(\tau_{2n+1}, \tau_{2n}), \text{ for all } \tau \in \Theta$.\\
[0.2cm]
ii) $J_{2}(\tau_{2n+1}, \tau) \leq J_{2}(\tau_{2n+1}, \tau_{2n+2}), \text{ for all } \tau \in \Theta$.

\end{lemma}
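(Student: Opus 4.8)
The plan is to prove both inequalities by identifying their right-hand sides with the value $V^{2n+1}(0)$ (resp. $V^{2n+2}(0)$) of the one-agent optimal stopping problem already solved in the construction, and by bounding their left-hand sides above by that same value. Concretely, I aim to establish
\[
J_{1}(\tau, \tau_{2n}) \leq V^{2n+1}(0) = J_{1}(\tau_{2n+1}, \tau_{2n}) \quad \text{for all } \tau \in \Theta,
\]
and the analogous chain for agent 2.

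\textbf{Upper bound for (i).} I would compare the genuine pay-off $I^{1}(\tau, \tau_{2n}) = X^{1}(\tau)\mathbbm{1}_{\{\tau \leq \tau_{2n}\}} + Y^{1}(\tau_{2n})\mathbbm{1}_{\{\tau_{2n} < \tau\}}$ with the auxiliary random variable $\xi^{2n+1}(\tau)$. They coincide off the tie set $\{\tau = \tau_{2n}\}$, while on that set their difference equals $(X^{1}(\tau_{2n}) - Y^{1}(\tau_{2n}))\mathbbm{1}_{\{\tau = \tau_{2n}\}} \leq 0$ by (A1). Hence $I^{1}(\tau, \tau_{2n}) \leq \xi^{2n+1}(\tau)$, and monotonicity (property (iv)) of $\rho^{1}$ together with the supremum characterization of $V^{2n+1}(0)$ in \eqref{TEXeQuationN6666} gives $J_{1}(\tau, \tau_{2n}) = \rho^{1}_{0, \tau \wedge \tau_{2n}}[I^{1}(\tau, \tau_{2n})] \leq \rho^{1}_{0, \tau \wedge \tau_{2n}}[\xi^{2n+1}(\tau)] \leq V^{2n+1}(0)$.

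\textbf{Attainment at $\tau_{2n+1}$.} By Lemma \ref{TEXlemma1111}(ii) (with index $2n$), $\tau_{2n+1} \wedge \tau_{2n} = \tilde{\tau}_{2n+1}$, so $J_{1}(\tau_{2n+1}, \tau_{2n}) = \rho^{1}_{0, \tilde{\tau}_{2n+1}}[I^{1}(\tau_{2n+1}, \tau_{2n})]$. The core step is the a.s. identity $I^{1}(\tau_{2n+1}, \tau_{2n}) = \xi^{2n+1}(\tilde{\tau}_{2n+1})$, which I would obtain in two moves: first $I^{1}(\tau_{2n+1}, \tau_{2n}) = \xi^{2n+1}(\tau_{2n+1})$, then $\xi^{2n+1}(\tau_{2n+1}) = \xi^{2n+1}(\tau_{2n+1} \wedge \tau_{2n}) = \xi^{2n+1}(\tilde{\tau}_{2n+1})$ by Remark \ref{TEXremark2}(ii). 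Feeding this into \eqref{TEXeQuationN6666} yields $J_{1}(\tau_{2n+1}, \tau_{2n}) = V^{2n+1}(0)$, and combining with the upper bound proves (i).

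\textbf{Main obstacle.} The delicate point is the first move $I^{1}(\tau_{2n+1}, \tau_{2n}) = \xi^{2n+1}(\tau_{2n+1})$, i.e. reconciling the tie-breaking convention (which on $\{\tau_{2n+1} = \tau_{2n}\}$ pays agent 1 the amount $X^{1}$) with the auxiliary family $\xi^{2n+1}$ (which pays $Y^{1}$ there). The two disagree only by $(X^{1}(\tau_{2n}) - Y^{1}(\tau_{2n}))$ on the tie set. Using the explicit form of $\tau_{2n+1}$ in Lemma \ref{TEXlemma1111}(i), this set is contained in $\{\tilde{\tau}_{2n+1} = \tau_{2n}\} \cap \{\tau_{2n-1} = \tau_{2n}\}$; on $\{\tau_{2n} = \tau_{2n-1}\}$, Lemma \ref{TEXlemma1111}(iii) forces $\tau_{2n} = T$, and (A2) then gives $X^{1}(T) = Y^{1}(T)$, so the discrepancy vanishes. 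This is exactly where (A2) and the structural Lemma \ref{TEXlemma1111} enter.

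\textbf{Part (ii).} The statement for agent 2 is symmetric and in fact cleaner: since the convention makes agent 1 responsible for stopping at a tie, one has the exact identity $I^{2}(\tau_{2n+1}, \tau) = \xi^{2n+2}(\tau)$ for every $\tau$, with no tie-set correction. Thus $J_{2}(\tau_{2n+1}, \tau) = \rho^{2}_{0, \tau \wedge \tau_{2n+1}}[\xi^{2n+2}(\tau)] \leq V^{2n+2}(0)$ at once. For attainment, Lemma \ref{TEXlemma1111}(ii) (with index $2n+1$) gives $\tau_{2n+2} \wedge \tau_{2n+1} = \tilde{\tau}_{2n+2}$, and Remark \ref{TEXRemark44}(ii) gives $\xi^{2n+2}(\tau_{2n+2}) = \xi^{2n+2}(\tau_{2n+2} \wedge \tau_{2n+1}) = \xi^{2n+2}(\tilde{\tau}_{2n+2})$; hence $J_{2}(\tau_{2n+1}, \tau_{2n+2}) = V^{2n+2}(0)$, proving (ii). The whole argument thus rests only on the one-agent optimality \eqref{TEXeQuationN6666}, the monotonicity of $\rho$, and the reconciliation of the tie-breaking convention with the auxiliary pay-off families.
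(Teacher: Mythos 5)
Your proposal is correct and follows essentially the same route as the paper: bound $J_{1}(\tau,\tau_{2n})$ above by $V^{2n+1}(0)$ via (A1) and monotonicity, then identify $J_{1}(\tau_{2n+1},\tau_{2n})$ with $V^{2n+1}(0)$ using Remark \ref{TEXremark2}(ii), Lemma \ref{TEXlemma1111}(ii)--(iii), (A2) and the optimality \eqref{TEXeQuationN6666}, with the symmetric (and tie-free) argument for agent 2. Your explicit reconciliation of the tie set $\{\tau_{2n+1}=\tau_{2n}\}$ is exactly the step the paper compresses into ``we have used iii) from Lemma \ref{TEXlemma1111} and $X^{1}(T)=Y^{1}(T)$ from (A2).''
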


\begin{proof}
Let us first prove statement i):\\
[0.2cm]
By (A1), we have $X^{1} \leq Y^{1}$; it follows 
$$X^{1}(\tau)\mathbbm{1}_{\{\tau \leq \tau_{2n}\}} + Y^{1}(\tau_{2n})\mathbbm{1}_{\{\tau_{2n} < \tau\}} \leq \xi^{2n+1}(\tau).$$
Hence, by monotonicity, and by definition of $V^{2n+1}(0)$, we have
\begin{equation}\label{TEXeQuationN1212}
J_{1}(\tau, \tau_{2n}) \leq V^{2n+1}(0)
\end{equation}
We will now show that $V^{2n+1}(0) = J_{1}(\tau_{2n+1}, \tau_{2n})$, which will complete the proof of statement i).\\
[0.2cm]
We have 
\begin{align*}
J_{1}(\tau_{2n+1}, \tau_{2n}) &= \rho^{1}_{0, \tau_{2n+1} \wedge \tau_{2n}}(X^{1}(\tau_{2n+1})\mathbbm{1}_{\{\tau_{2n+1} \leq \tau_{2n}\}} + Y^{1}(\tau_{2n})\mathbbm{1}_{\{\tau_{2n} < \tau_{2n+1}\}})\\
&= \rho^{1}_{0, \tau_{2n+1} \wedge \tau_{2n}}(\xi^{2n+1}(\tau_{2n+1})),
\end{align*}
where we have used iii) from Lemma \ref{TEXlemma1111}, and $X^{1}(T) = Y^{1}(T)$ from (A2) to show the last equality.\\
[0.2cm]
On the other hand, by ii) from Remark \ref{TEXremark2} and ii) from Lemma \ref{TEXlemma1111},
\begin{align*}
\rho^{1}_{0, \tau_{2n+1} \wedge \tau_{2n}}(\xi^{2n+1}(\tau_{2n+1})) = \rho^{1}_{0, \tau_{2n+1} \wedge \tau_{2n}}(\xi^{2n+1}(\tau_{2n+1} \wedge \tau_{2n})) = \rho^{1}_{0, \tilde{\tau}_{2n+1}\wedge \tau_{2n}}(\xi^{2n+1}(\tilde{\tau}_{2n+1})).
\end{align*}
By optimality of $\tilde{\tau}_{2n+1}$ for $V^{2n+1}(0)$ (cf. Eq. \eqref{TEXeQuationN6666}), we get
$$\rho^{1}_{0, \tilde{\tau}_{2n+1}\wedge \tau_{2n}}(\xi^{2n+1}(\tilde{\tau}_{2n+1})) = V^{2n+1}(0).$$
Hence, we conclude
\begin{equation}\label{TEXequatioN1313}
J_{1}(\tau_{2n+1}, \tau_{2n}) = \rho^{1}_{0, \tau_{2n+1} \wedge \tau_{2n}}(\xi^{2n+1}(\tau_{2n+1}))
= \rho^{1}_{0, \tilde{\tau}_{2n+1}\wedge \tau_{2n}}(\xi^{2n+1}(\tilde{\tau}_{2n+1})) = V^{2n+1}(0).
\end{equation}
From Eq. \eqref{TEXequatioN1313} and Eq. \eqref{TEXeQuationN1212}, we get
$$J_{1}(\tau, \tau_{2n}) \leq J_{1}(\tau_{2n+1}, \tau_{2n}).$$ 
Let us now prove statement ii):\\
[0.2cm]
We have 
\begin{equation}\label{TEXeQuaTioN1414}
J_{2}(\tau_{2n+1}, \tau) \leq V^{2n+2}(0),
\end{equation}
by definition of $V^{2n+2}(0)$ (cf. Eq. \eqref{TEXeQuatioN8888}).\\
[0.2cm]
We will now show that $J_{2}(\tau_{2n+1}, \tau_{2n+2}) = V^{2n+2}(0)$, which will complete the proof.\\
[0.2cm]
By definition of $\xi^{2n+2}(\tau_{2n})$, by ii) from Remark \ref{TEXRemark44}, and by ii) from Lemma \ref{TEXlemma1111}, we have 
\begin{align*}
J_{2}(\tau_{2n+1}, \tau_{2n+2}) &= \rho^{2}_{0, \tau_{2n+1} \wedge \tau_{2n+2}}(X^{2}(\tau_{2n+2})\mathbbm{1}_{\{\tau_{2n+2} < \tau_{2n+1}\}} + Y^{2}(\tau_{2n+1})\mathbbm{1}_{\{\tau_{2n+1} \leq \tau_{2n+2}\}})\\
&= \rho^{2}_{0, \tau_{2n+1} \wedge \tau_{2n+2}}(\xi^{2n+2}(\tau_{2n+2}))\\ 
&= \rho^{2}_{0, \tau_{2n+1} \wedge \tau_{2n+2}}(\xi^{2n+2}(\tau_{2n+2} \wedge \tau_{2n+1}))\\
&= \rho^{2}_{0, \tilde{\tau}_{2n+2} \wedge \tau_{2n+1}}(\xi^{2n+2}(\tilde{\tau}_{2n+2})).
\end{align*}
By Eq. \eqref{TEXeQuatioN8888}, we have 
$$\rho^{2}_{0, \tilde{\tau}_{2n+2} \wedge \tau_{2n+1}}(\xi^{2n+2}(\tilde{\tau}_{2n+2})) = V^{2n+2}(0).$$
Hence, we conclude
\begin{equation}\label{TEXequatioN1515}
J_{2}(\tau_{2n+1}, \tau_{2n+2}) = \rho^{2}_{0, \tau_{2n+1} \wedge \tau_{2n+2}}(\xi^{2n+2}(\tau_{2n+2}))
= \rho^{2}_{0, \tilde{\tau}_{2n+2}\wedge \tau_{2n+1}}(\xi^{2n+2}(\tilde{\tau}_{2n+2})) = V^{2n+2}(0).
\end{equation}
From Eq. \eqref{TEXequatioN1515} and Eq. \eqref{TEXeQuaTioN1414}, we get 
$$J_{2}(\tau_{2n+1}, \tau) \leq J_{2}(\tau_{2n+1}, \tau_{2n+2}).$$
\end{proof}

\begin{remark}
As a by-product of the previous proof, we find that $\tau_{2n+1}$ is optimal for the problem with value $V^{2n+1}(0)$, and $\tau_{2n+2}$ is optimal for the problem with value $V^{2n+2}(0)$.

\end{remark}

\begin{definition}
We define $\tau_{1}^{*} = \lim_{n \to +\infty}\tau_{2n+1}$, and $\tau_{2}^{*} = \lim_{n \to +\infty}\tau_{2n}$.

\end{definition}

\begin{proposition}\label{TEXproposition333}
We assume that $\rho^{1}$ and $\rho^{2}$ satisfy properties $(i) - (vii)$, and the following additional property: for $i \in \{1, 2\}$, 
\begin{equation}\label{TEXEquatiON1616}
\limsup_{n \to +\infty}\rho^{i}_{0, \nu_{n}}[\xi(\nu_{n})] = \rho^{i}_{0, \nu}[\xi(\nu)],
\end{equation}
for any sequence $(\nu_{n}) \subset \Theta^{\mathbbm{N}}$, $\nu \in \Theta$, such that $\nu_{n} \downarrow \nu$. We have:\\
[0.2cm]
i) For all $\tau \in \Theta$, $\lim_{n \to +\infty}J_{1}(\tau, \tau_{2n}) = J_{1}(\tau, \tau_{2}^{*})$.\\
[0.2cm]
ii) For all $\tau \in \Theta$, $\lim_{n \to +\infty}J_{2}(\tau_{2n+1}, \tau) = J_{2}(\tau_{1}^{*}, \tau)$.\\
[0.2cm]
iii) For all $\tau \in \Theta$, $\lim_{n \to +\infty}J_{1}(\tau_{2n+1}, \tau_{2n+2}) = J_{1}(\tau_{1}^{*}, \tau_{2}^{*})$.\\
[0.2cm]
iv) For all $\tau \in \Theta$, $\lim_{n \to +\infty}J_{2}(\tau_{2n+1}, \tau_{2n+2}) = J_{2}(\tau_{1}^{*}, \tau_{2}^{*})$.

\end{proposition}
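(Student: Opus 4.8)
The plan is to reduce each of the four statements to the continuity hypothesis \eqref{TEXEquatiON1616} by exhibiting, in each case, an auxiliary admissible family and a sequence of evaluation times decreasing to a limit. The first preparatory step is to understand the convergence of the two subsequences. Combining part (i) of Lemma~\ref{TEXlemma1111} with Proposition~\ref{TEXProposition2} (which yields $\tilde\tau_{n+1}\le\tau_{n-1}$) I would first show $\tau_{n+1}\le\tau_{n-1}$ for every $n\ge 2$: on $\{\tilde\tau_{n+1}<\tau_n\}$ one has $\tau_{n+1}=\tilde\tau_{n+1}\le\tau_{n-1}$, and on $\{\tilde\tau_{n+1}=\tau_n\}$ one has $\tau_{n+1}=\tau_{n-1}$. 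Hence $(\tau_{2n})_n$ and $(\tau_{2n+1})_n$ are non-increasing, and since $\Theta$ is stable under monotone limits, $\tau_1^*,\tau_2^*\in\Theta$ with $\tau_{2n}\downarrow\tau_2^*$ and $\tau_{2n+1}\downarrow\tau_1^*$. I would then record the structural fact that is the real engine of the proof: a Bermudan time takes its values in $\{\theta_k(\omega)\}\cup\{T\}$, so a \emph{non-increasing} sequence of such times is \emph{eventually constant pointwise}; thus for a.e.\ $\omega$ there is $N(\omega)$ with $\tau_{2n+1}(\omega)=\tau_1^*(\omega)$ and $\tau_{2n}(\omega)=\tau_2^*(\omega)$ for all $n\ge N(\omega)$.

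For statement (i), fix $\tau\in\Theta$ and set $\nu_n:=\tau\wedge\tau_{2n}\downarrow\nu:=\tau\wedge\tau_2^*$. I would introduce the family $\Phi(\sigma):=X^1(\sigma)\mathbbm{1}_{\{\sigma=\tau\}}+Y^1(\sigma)\mathbbm{1}_{\{\sigma<\tau\}}$, which is admissible and $p$-integrable by (A3) and the $\cf_\sigma$-measurability of $\{\sigma=\tau\}$ and $\{\sigma<\tau\}$. A direct check, splitting on $\{\tau\le\tau_{2n}\}$ and $\{\tau_{2n}<\tau\}$, gives $\Phi(\nu_n)=I^1(\tau,\tau_{2n})$ and $\Phi(\nu)=I^1(\tau,\tau_2^*)$; the tie-breaking is harmless here because on $\{\tau=\tau_{2n}\}$ the minimum equals the fixed time $\tau$ and agent~1's convention assigns $X^1(\tau)$ for every $n$. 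Applying \eqref{TEXEquatiON1616} to $\Phi$ along $\nu_n\downarrow\nu$ yields $\limsup_nJ_1(\tau,\tau_{2n})=J_1(\tau,\tau_2^*)$; since the hypothesis holds for every down-converging sequence, applying it along arbitrary subsequences forces the liminf to coincide as well, so the genuine limit exists and (i) follows.

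Statements (ii), (iii) and (iv) share this architecture but are substantially more delicate, and this is where I expect the real difficulty. Take (ii): with $\nu_n:=\tau_{2n+1}\wedge\tau\downarrow\nu:=\tau_1^*\wedge\tau$, the integrand $I^2(\tau_{2n+1},\tau)$ \emph{cannot} be written as $\xi(\nu_n)$ for a single admissible family, because on the tie set $\{\tau=\tau_{2n+1}\}$ the minimum equals $\tau$ and the pay-off is the continuation value $Y^2$ (agent~1 being responsible), whereas on $\{\tau<\tau_{2n+1}\}$ the minimum is again $\tau$ but the pay-off is $X^2$; the datum distinguishing the two is not recoverable from $\nu_n$. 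It is exactly here that the stabilisation of the first step is indispensable: on $\{\tau=\tau_1^*\}$ one has $\tau_{2n+1}=\tau_1^*=\tau$ for $n$ large, so the limiting tie convention is the correct one and $I^2(\tau_{2n+1},\tau)\to I^2(\tau_1^*,\tau)$ pointwise (indeed the sequence is eventually equal to its limit); without the Bermudan structure the asserted limit could fail on the tie set. To push this through $\rho^2$ I would rebase by consistency (v), $J_2(\tau_{2n+1},\tau)=\rho^2_{0,\nu}[\rho^2_{\nu,\nu_n}[I^2(\tau_{2n+1},\tau)]]$, and localise the inner operator on the three $\cf_\nu$-measurable events $\{\tau<\tau_1^*\}$, $\{\tau=\tau_1^*\}$, $\{\tau>\tau_1^*\}$ by the generalised zero-one law (vi): on the first the evaluation time is frozen and the term is the constant $X^2(\tau)$; on the second $\nu_n=\nu$ and the integrand is eventually $Y^2(\tau)$; on the third the term recombines, via consistency and (vi), into $\rho^2_{0,\tau_{2n+1}}[Y^2(\tau_{2n+1})]$ with $\tau_{2n+1}\downarrow\tau_1^*$, which \eqref{TEXEquatiON1616} (applied to the family $Y^2$) handles. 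Statements (iii) and (iv) are treated identically, now with $\nu_n:=\tau_{2n+1}\wedge\tau_{2n+2}\downarrow\tau_1^*\wedge\tau_2^*$ and the regions $\{\tau_1^*<\tau_2^*\}$, $\{\tau_1^*=\tau_2^*\}$, $\{\tau_1^*>\tau_2^*\}$, stabilisation again being what reconciles the pay-off on $\{\tau_1^*=\tau_2^*\}$ with $I^i(\tau_1^*,\tau_2^*)$.

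The main obstacle is thus the tie-breaking convention: the transparent single-family argument of (i) must, for (ii)--(iv), be replaced by the rebasing-and-localisation scheme above, and one must check that after localisation every surviving piece is either constant in $n$ or of the base-$0$ form $\rho^i_{0,\sigma_n}[\zeta(\sigma_n)]$ with $\sigma_n\downarrow\sigma$ covered by \eqref{TEXEquatiON1616}. The one remaining subtlety I would watch most carefully is the final passage of the \emph{outer} operator $\rho^i_{0,\nu}$ through the assembled, $\cf_\nu$-measurable, pointwise-convergent and $L^p$-dominated sequence: since only a one-sided monotone Fatou property (vii) is assumed, I would avoid appealing to a (non-available) dominated-convergence statement for the nonlinear $\rho^i$ and instead recombine each localised contribution into a base-$0$ evaluation \emph{before} taking the limit, so that the whole argument rests only on consistency, knowledge preservation and \eqref{TEXEquatiON1616}; verifying that this recombination is legitimate on each region is the technical heart of the proof.
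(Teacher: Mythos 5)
Your preliminary step (monotonicity of $(\tau_{2n})$ and $(\tau_{2n+1})$, stability of $\Theta$ under monotone limits, and the a.s.\ eventual pointwise constancy of non-increasing Bermudan sequences) is exactly the structural input the paper relies on, and your proof of statement i) is essentially the paper's: the paper uses the admissible family $(I^{1}(\tau,\nu))_{\nu\in\Theta}$ and its right-continuity along Bermudan stopping strategies, while your family $\Phi$ is just an explicit way of matching the literal form of \eqref{TEXEquatiON1616}; your subsequence argument upgrading the $\limsup$ to a genuine limit is a correct and worthwhile addition.

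The gap is in ii)--iv). You correctly observe that $I^{2}(\tau_{2n+1},\tau)$ cannot be written as $\zeta(\tau\wedge\tau_{2n+1})$ for a single admissible family $\zeta$, but you then conclude that a direct application of \eqref{TEXEquatiON1616} is unavailable and substitute a rebasing-and-localisation scheme. The much shorter fix, and the one the paper uses, is to index the pay-off family by the stopping time that actually varies rather than by the minimum: the family $\nu\mapsto I^{2}(\nu,\tau)=X^{2}(\tau)\mathbbm{1}_{\{\tau<\nu\}}+Y^{2}(\nu)\mathbbm{1}_{\{\nu\le\tau\}}$ is admissible in $\nu$ (each term is $\cf_{\nu}$-measurable), so by the Bermudan structure $I^{2}(\tau_{2n+1},\tau)$ is a.s.\ eventually equal to $I^{2}(\tau_{1}^{*},\tau)$, and \eqref{TEXEquatiON1616} applies along $\nu_{n}=\tau_{2n+1}\downarrow\tau_{1}^{*}$; the passage from the evaluation time $\tau_{2n+1}$ to $\tau_{2n+1}\wedge\tau$ costs nothing by consistency and knowledge preservation, since the integrand is $\cf_{\tau_{2n+1}\wedge\tau}$-measurable. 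The tie-set difficulty you describe simply does not arise with this indexing. Your substitute scheme, by contrast, stops at precisely its critical point: after localising the inner operator $\rho^{2}_{\nu,\nu_{n}}$ on the three regions, you must send $n\to+\infty$ through the outer operator $\rho^{2}_{0,\nu}$ applied to the reassembled $\cf_{\nu}$-measurable integrand, and none of the assumptions (i)--(vii) gives two-sided continuity of $\rho^{2}_{0,\nu}[\cdot]$ in the integrand (only the one-sided Fatou property (vii)); nor can you localise directly at base time $0$, since the generalized zero-one law requires the localising set to lie in $\cf_{0}$ and $\{\tau>\tau_{1}^{*}\}$ does not. You flag this recombination as ``the technical heart'' but do not supply it, so as written ii)--iv) are not established. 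With the one-family device, iii) and iv) then follow as in the paper from the a.s.\ eventual constancy of both $(\tau_{2n+1})$ and $(\tau_{2n+2})$.
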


\begin{proof}
Let us first show statement i):\\
[0.2cm]
Let us recall the following notation:\\
[0.2cm]
for a fixed $\tau \in \Theta$,
$$I^{1}(\tau, \nu) \coloneqq X^{1}(\tau)\mathbbm{1}_{\{\tau \leq \nu\}} + Y^{1}(\nu)\mathbbm{1}_{\{\nu < \tau\}},$$
$$I^{1}(\tau, \tau_{2}^{*}) \coloneqq X^{1}(\tau)\mathbbm{1}_{\{\tau \leq \tau_{2}^{*}\}} + Y^{1}(\tau_{2}^{*})\mathbbm{1}_{\{\tau_{2}^{*} < \tau\}}.$$
With this notation, we have 
$$J_{1}(\tau, \tau_{2n}) = \rho^{1}_{0, \tau \wedge \tau_{2n}}[I(\tau, \tau_{2n})], \text{ \;\; and \;\; } J_{1}(\tau, \tau_{2}^{*}) = \rho^{1}_{0, \tau \wedge \tau_{2}^{*}}[I(\tau, \tau_{2}^{*})].$$
We note that the sequence $(\tau_{2n})$ and $(\tau \wedge \tau_{2n})$ converges from above to $\tau_{2}^{*}$ and $\tau \wedge \tau_{2}^{*}$, respectively. Moreover, for each $\tau \in \Theta$, the family $(I^{1}(\tau, \nu))_{\nu \in \Theta}$ is admissible. Indeed, for each $\nu \in \Theta$, $I^{1}(\tau, \nu)$ is $\mathcal{F}_{\nu}$-measurable. Moreover, if $\{\nu = \nu'\}$, $I(\tau, \nu) = I(\tau, \nu')$ a.s. Hence, as any admissible family in our framework is right-continuous along Bermudan stopping strategies (cf. Remark 2.10 in \cite{Grigorova-1}), we get
$$\lim_{n \to +\infty}I(\tau, \tau_{2n}) = I(\tau, \tau_{2}^{*}).$$
Hence, by property \eqref{TEXEquatiON1616} on $\rho^{1}$, we get
$$\limsup_{n \to +\infty}\rho^{1}_{0, \tau \wedge \tau_{2n}}[I(\tau, \tau_{2n})] = \rho^{1}_{0, \tau \wedge \tau_{2}^{*}}[I(\tau, \tau_{2}^{*})].$$
Now, let us prove statement ii):\\
[0.2cm]
For $\tau \in \Theta$, we recall the following notation:
$$I^{2}(\nu, \tau) \coloneqq X^{2}(\tau)\mathbbm{1}_{\{\tau < \nu\}} + Y^{2}(\nu)\mathbbm{1}_{\{\nu \leq \tau\}},$$
The family $(I^{2}(\nu, \tau)_{\nu \in \Theta}$ is admissible. Indeed, for each $\nu \in \Theta$, $I^{2}(\nu, \tau)$ is $\mathcal{F}_{\nu}$-measurable. Moreover, on $\{\nu_{1} = \nu_{2}\}$, $I^{2}(\nu_{1}, \tau) = I^{2}(\nu_{2}, \tau)$ a.s.\\
[0.2cm]
As $(\tau_{2n+1})$ converges from above to $\tau_{1}^{*}$, and as $(I^{2}(\nu, \tau)_{\nu \in \Theta})$ is right-continuous along Bermudan stopping strategies (cf. Remark 2.10 in \cite{Grigorova-1}), we get
$$\lim_{n \to +\infty}I^{2}(\tau_{2n+1}, \tau) = I^{2}(\tau_{1}^{*}, \tau).$$
By property \eqref{TEXEquatiON1616} on $\rho^{2}$, we get
$$\limsup_{n \to +\infty}\rho^{2}_{0, \tau_{2n+1} \wedge \tau}[I^{2}(\tau_{2n+1}, \tau)] = \rho^{2}_{0, \tau_{1}^{*} \wedge \tau}[I^{2}(\tau_{1}^{*}, \tau)].$$
We now prove statement iii).\\
[0.2cm]
The proof relies again on the Bermudan structure on $\Theta$. For any sequence $(\tau_{n}) \in \Theta^{\mathbbm{N}}$ converging from above to $\tau \in \Theta$, we have: for almost each $\omega \in \Omega$, there exists $n_{0} = n_{0}(\omega)$ such that for all $n \geq n_{0}$, $\tau_{n}(\omega) = \tau(\omega)$ (cf. Remark 10 in \cite{Grigorova-1}).\\
[0.2cm]
Hence, for almost each $\omega \in \Omega$, there exists $n_{0} = n_{0}(\omega)$ such that for $n \geq n_{0}$, 
$\tau_{2n+1}(\omega) = \tau_{1}^{*}(\omega)$, $\tau_{2n+2}(\omega) = \tau_{2}^{*}(\omega)$ and 
$$I(\tau_{2n+1}, \tau_{2n+2})(\omega) = X^{1}(\tau_{1}^{*})(\omega)\mathbbm{1}_{\{\tau_{1}^{*} \leq \tau_{2}^{*}\}}(\omega) + Y^{1}(\tau_{2}^{*})(\omega)\mathbbm{1}_{\{\tau_{2}^{*} < \tau_{1}^{*}\}}(\omega).$$
By property \eqref{TEXEquatiON1616} on $\rho^{1}$, we get
$$\lim_{n \to +\infty}J_{1}(\tau_{2n+1}, \tau_{2n+2}) = J_{1}(\tau_{1}^{*}, \tau_{2}^{*}).$$
The proof of iv) is based on the same arguments.
\end{proof}
\noindent We are now ready to complete the proof of Theorem \ref{TEXtheorem1}.
\begin{proof}[Proof of Theorem \ref{TEXtheorem1}]
By combining Lemma \ref{TEXlemma2222} and Proposition \ref{TEXproposition333}, we get:
$$J_{1}(\tau,\tau_{2}^{*}) \leq J_{1}(\tau_{1}^{*}, \tau_{2}^{*}), \text{ for all } \tau \in \Theta.$$
$$J_{2}(\tau_{1}^{*},\tau) \leq J_{2}(\tau_{1}^{*}, \tau_{2}^{*}), \text{ for all } \tau \in \Theta.$$
Hence, $(\tau_{1}^{*}, \tau_{2}^{*})$ is a Nash equilibrium point.
\end{proof}
 We have thus shown that the non-linear non-zero-sum Dynkin game with Bermudan strategies admits a Nash equilibrium point. 
\newpage
\section{Appendix}
\begin{proposition}\label{TEXproposition-localisation}(Localisation property) 
Let $(\xi(\tau))_{\tau \in \Theta}$ be a given admissible $p$-integrable family. Let $(V(\tau))_{\tau \in \Theta}$ be the value family of the optimal stopping problem: for $S \in \Theta$, 
$$V(S) = \esssup_{\tau \in \Theta_{S}}\rho_{S, \tau}[\xi(\tau)].$$
Let $S \in \Theta$, and let $A$ be in $\mathcal{F}_{S}$. We consider the pay-off family $(\xi(\tau)\mathbbm{1}_{A})_{\tau \in \Theta_{S}}$, and we denote by $(V_{A}(\tau))_{\tau \in \Theta_{S}}$ the corresponding value family, defined by:
$$V_{A}(\tau) = \esssup_{\nu \in \Theta_{\tau}}\rho_{\tau, \nu}[\xi(\nu)\mathbbm{1}_{A}].$$
If $\rho$ satisfies the usual ``zero-one law'' (that is $\mathbbm{1}_{A}\rho_{S, \tau}[\eta] = \mathbbm{1}_{A}\rho_{S, \tau}[\mathbbm{1}_{A}\eta]$ for all $A \in \mathcal{F}_{S}$, for all $\eta \in L^{p}(\mathcal{F}_{\tau})$), then for each $\tau \in \Theta_{S}$,
$$\mathbbm{1}_{A}V_{A}(\tau) = \mathbbm{1}_{A}V(\tau).$$

\end{proposition}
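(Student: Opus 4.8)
The plan is to reduce the equality of the two value families to a single pay-off-level application of the zero-one law and then to push the indicator $\mathbbm{1}_A$ through the essential supremum. First I would fix $\tau \in \Theta_S$ and record the key measurability fact: since $\tau \geq S$ and $A \in \cf_S$, we have $A \in \cf_\tau$, and more generally $A \in \cf_\nu$ for every $\nu \in \Theta_\tau$ (as $\nu \geq \tau \geq S$). In particular $\mathbbm{1}_A$ is $\cf_\nu$-measurable, so $\xi(\nu)\mathbbm{1}_A \in L^p(\cf_\nu)$ and the localized family $(\xi(\nu)\mathbbm{1}_A)_{\nu \in \Theta_S}$ is admissible and $p$-integrable; hence $V_A$ is a well-defined value family to which the notation of the statement applies.

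The crux is the following pointwise identity, valid for each $\nu \in \Theta_\tau$: applying the zero-one law to the operator $\rho_{\tau,\nu}$ with the set $A \in \cf_\tau$ and terminal condition $\xi(\nu) \in L^p(\cf_\nu)$ gives
$$\mathbbm{1}_A \rho_{\tau,\nu}[\xi(\nu)] = \mathbbm{1}_A \rho_{\tau,\nu}[\mathbbm{1}_A \xi(\nu)] = \mathbbm{1}_A \rho_{\tau,\nu}[\xi(\nu)\mathbbm{1}_A].$$
This says that, after multiplication by $\mathbbm{1}_A$, the evaluation of the original pay-off $\xi(\nu)$ and of the localized pay-off $\xi(\nu)\mathbbm{1}_A$ coincide, for each competing strategy $\nu \in \Theta_\tau$.

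It then remains to interchange $\mathbbm{1}_A$ with the essential supremum over $\nu \in \Theta_\tau$. For this I would invoke the standard localization lemma for the essential supremum: for any $B \in \cf$ and any family $(Z_i)$ one has $\mathbbm{1}_B \esssup_i Z_i = \esssup_i (\mathbbm{1}_B Z_i)$. The easy direction is that $\mathbbm{1}_B \esssup_i Z_i$ dominates each $\mathbbm{1}_B Z_i$ (using $\mathbbm{1}_B \geq 0$), hence dominates $\esssup_i(\mathbbm{1}_B Z_i)$; for the converse one uses minimality of the essential supremum to force $\esssup_i(\mathbbm{1}_B Z_i)$ to vanish on $B^c$ and to dominate $\esssup_i Z_i$ on $B$. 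Applying this twice with $B = A$ and inserting the pointwise identity above in between yields the chain
$$\mathbbm{1}_A V(\tau) = \esssup_{\nu \in \Theta_\tau} \mathbbm{1}_A \rho_{\tau,\nu}[\xi(\nu)] = \esssup_{\nu \in \Theta_\tau} \mathbbm{1}_A \rho_{\tau,\nu}[\xi(\nu)\mathbbm{1}_A] = \mathbbm{1}_A V_A(\tau),$$
which is the desired equality.

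The computation is short; the only points requiring care are the measurability bookkeeping (verifying $A \in \cf_\nu$ so that the zero-one law applies with first index $\tau$, and that the localized pay-off family is admissible) and the justification that $\mathbbm{1}_A$ commutes with the essential supremum even though the evaluations $\rho_{\tau,\nu}[\cdot]$ need not be nonnegative. I expect the essential-supremum localization step to be the only genuinely non-routine ingredient, and it is the natural place to be explicit in the full write-up.
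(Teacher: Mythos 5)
Your proof is correct and follows essentially the same route as the paper's: pull $\mathbbm{1}_A$ inside the essential supremum, apply the zero-one law to $\rho_{\tau,\nu}$ termwise (using $A\in\cf_S\subset\cf_\tau$), and pull $\mathbbm{1}_A$ back out. The paper states this as a single chain of equalities; your additional care with the essential-supremum localization and the measurability bookkeeping is the same argument spelled out in more detail.
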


\begin{proof}
By the definition of $V(\tau)$ and the usual ``zero-one law'', we have
\begin{align*}
\mathbbm{1}_{A}V(\tau) &= \mathbbm{1}_{A}\esssup_{\nu \in \Theta_{\tau}}\rho_{\tau, \nu}[\xi(\nu)] = \esssup_{\nu \in \Theta_{\tau}}\mathbbm{1}_{A}\rho_{\tau, \nu}[\xi(\nu)]\\
&= \esssup_{\nu \in \Theta_{\tau}}\mathbbm{1}_{A}\rho_{\tau, \nu}[\mathbbm{1}_{A}\xi(\nu)] = \mathbbm{1}_{A}\esssup_{\nu \in \Theta_{\tau}}\rho_{\tau, \nu}[\mathbbm{1}_{A}\xi(\nu)] = \mathbbm{1}_{A}V_{A}(\tau).
\end{align*}

\end{proof}

\newpage
\bibliographystyle{plainnat}

\end{document}